\newtheorem{theo}{Theorem}[section]
\newtheorem{defi}{Definition}[section]
\newtheorem{prop}{Proposition}[section]
\newtheorem{coro}{Corollary}[section]
\newtheorem{lema}{Lemma}[section]
\newtheorem{claim}{Claim}[section]
\newtheorem{ex}{Example}[section]
\newtheoremstyle{meuestilo}
  {6pt plus 2pt minus 2pt}
  {6pt plus 2pt minus 2pt}
  {}
  {1.5em}
  {\itshape}
  {.}
  { }
  {}
\theoremstyle{meuestilo}
\newtheorem{remark}{Remark}
\newtcbox{\DD}{on line, myinlinebox}
\renewenvironment{proof}[1][Proof]%
{\noindent\textbf{#1.} }%
{\hfill\rule{0.7em}{0.7em}\par}
\noindent\textbf{#1:} }%
\newmdenv[
  backgroundcolor=yellow!20,
  nobreak=true
]{paint}
\title{Geometric Obstructions in Finsler Spaces and Torsion-Free Persistent Homology}
\author{Rafael Cavalcanti}
\address{Federal University of Pernambuco, Recife, Brazil}
\email{rafael@dmat.ufpe.br}
\date{\today}
\begin{document}
\begin{abstract}
    We relate the novel concept of Topological Data Analysis in Finsler space with representability property, which is expected to prevent spurious features. We use decomposition of integer matrix in order to find suitable prime integer $p$ such that homologies over $\mathbb{Z}_p$ encompasses only the holes associated to the free part.
\end{abstract}
\maketitle

\tableofcontents
\markboth
  {Rafael Cavalcanti}
  {Geometric Obstructions in Finsler Spaces and Torsion-Free Persistent Homology.}
\newpage
\section{Introduction}
\par It is not unknown that theoretical results of simplicial complexes and knowledge of their homology groups have improved the understanding of many data set by the use of Topological Data Analysis, for short TDA. The main example of this phrase is the Nerve Theorem.
\begin{theo}[Alexandroff, \cite{alexandroff1928}]
        Let $\mathcal{U}$ be a good cover of a topological space $X$. Then, the geometric realization of $\mathcal{N(U)}$ is homotopy equivalent to $\displaystyle\bigcup_{U\in\mathcal{U}}U$.
\end{theo}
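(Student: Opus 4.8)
The plan is to exhibit both $\displaystyle\bigcup_{U\in\mathcal U}U$ and the geometric realisation $|\mathcal N(\mathcal U)|$ as homotopy equivalent to a single auxiliary space, the \emph{Mayer--Vietoris blow-up} of the cover, and then read off the equivalence between them. For a nonempty finite $\sigma\subseteq\mathcal U$ set $U_\sigma=\bigcap_{U\in\sigma}U$, and let $\Delta_\sigma$ be the affine simplex on the vertex set $\sigma$, so that the $\Delta_\sigma$ glued along their faces assemble into $|\mathcal N(\mathcal U)|$. Put
\[
  B \;=\; \left(\ \coprod_{\sigma\colon U_\sigma\neq\emptyset}\Delta_\sigma\times U_\sigma\ \right)\Big/\!\sim,
\]
where for $\tau\subseteq\sigma$ the subspace $\Delta_\tau\times U_\sigma$ of $\Delta_\sigma\times U_\sigma$ is identified, through the inclusion $U_\sigma\hookrightarrow U_\tau$, with $\Delta_\tau\times U_\sigma\subseteq\Delta_\tau\times U_\tau$. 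Two projections are then automatic: $p\colon B\to\bigcup_U U$, forgetting the simplex coordinate, and $q\colon B\to|\mathcal N(\mathcal U)|$, forgetting the point of $X$. I would prove the theorem by showing that $p$ and $q$ are both homotopy equivalences; then, for a homotopy inverse $s$ of $p$, the composite $q\circ s\colon\bigcup_U U\to|\mathcal N(\mathcal U)|$ is the asserted equivalence.

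For $q$, filter $|\mathcal N(\mathcal U)|$ by its skeleta $K_n$ and $B$ by $B_n=q^{-1}(K_n)$. Then $B_n$ is built from $B_{n-1}$ by attaching, for each $n$-simplex $\sigma$, a copy of $\Delta_\sigma\times U_\sigma$ along $\partial\Delta_\sigma\times U_\sigma$, mirroring how $K_n$ is built from $K_{n-1}$ by attaching $\Delta_\sigma$ along $\partial\Delta_\sigma$. Because $U_\sigma$ is contractible (good-cover hypothesis), the projections $\Delta_\sigma\times U_\sigma\to\Delta_\sigma$ and $\partial\Delta_\sigma\times U_\sigma\to\partial\Delta_\sigma$ are homotopy equivalences, while $\partial\Delta_\sigma\times U_\sigma\hookrightarrow\Delta_\sigma\times U_\sigma$ and $\partial\Delta_\sigma\hookrightarrow\Delta_\sigma$ are cofibrations; the gluing lemma for homotopy equivalences along cofibrations then upgrades an equivalence $B_{n-1}\to K_{n-1}$ to an equivalence $B_n\to K_n$. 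The base case $B_0=\coprod_U(\Delta_{\{U\}}\times U)\to\coprod_U\Delta_{\{U\}}=K_0$ is an equivalence since each $U$ is nonempty and contractible, and taking the colimit over the (cofibration) skeletal filtration shows $q$ is a homotopy equivalence.

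For $p$, I would take a partition of unity $\{\varphi_U\}_{U\in\mathcal U}$ subordinate to $\mathcal U$, available because a good cover is in particular a numerable open cover. For $x\in\bigcup_U U$ let $\sigma(x)=\{U:\varphi_U(x)>0\}$: this is a nonempty finite set, and each of its members contains $x$, so $x\in U_{\sigma(x)}$. Define $s\colon\bigcup_U U\to B$ by sending $x$ to the class of $\bigl((\varphi_U(x))_{U\in\sigma(x)},\,x\bigr)\in\Delta_{\sigma(x)}\times U_{\sigma(x)}$; then $p\circ s=\mathrm{id}$. For the other composite, $s\circ p\simeq\mathrm{id}_B$ via the straight-line homotopy which, on a point represented by $(t,x)\in\Delta_\sigma\times U_\sigma$, moves $t$ affinely to $(\varphi_U(x))_U$ inside $\Delta_{\sigma\cup\sigma(x)}$ --- a genuine simplex of $\mathcal N(\mathcal U)$, since $x$ lies in every member of $\sigma\cup\sigma(x)$ --- while keeping the $X$-coordinate $x$ fixed. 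One checks chart by chart that $s$ and this homotopy are well defined and continuous on the blow-up. Hence $p$ is a homotopy equivalence, and $q\circ s$ realises the desired homotopy equivalence $\bigcup_U U\simeq|\mathcal N(\mathcal U)|$.

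The crux, I expect, is the analysis of $p$ --- more precisely, locating exactly where paracompactness/openness of $\mathcal U$ is indispensable. The $q$-step is essentially formal and goes through for any cover whose nonempty finite intersections are contractible, whereas the nerve theorem genuinely fails without a point-set hypothesis guaranteeing a subordinate partition of unity; so the delicate part is to verify that $s$ is well defined and continuous and that the sliding homotopy is compatible with the blow-up identifications. A secondary technicality, easily overlooked, is that the skeletal filtration of $B$ can be infinite, so one must confirm that $B$ carries the colimit topology along that filtration in order that ``colimit of homotopy equivalences along cofibrations'' still applies.
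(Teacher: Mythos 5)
The paper does not give a proof of this theorem; it is stated as classical background and attributed to Alexandroff via a citation, so there is no internal argument to compare against. Your Mayer--Vietoris blow-up proof is the standard modern route and is essentially correct: the $q$-step (skeletal filtration of $B$ and $|\mathcal{N}(\mathcal{U})|$, the gluing lemma for homotopy equivalences along cofibrations, and the colimit-topology caveat you flag at the end) and the $p$-step (partition-of-unity section $s$ with $p\circ s=\operatorname{id}$, followed by the sliding homotopy inside $\Delta_{\sigma\cup\sigma(x)}$, which is legitimate because $x\in U_{\sigma\cup\sigma(x)}$) are exactly the right ingredients, and the chart-by-chart compatibility check with the blow-up identifications is the routine verification you describe.

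The one claim you should not let pass is ``available because a good cover is in particular a numerable open cover.'' That is not a consequence of the good-cover definition: contractibility of all nonempty finite intersections is a homotopical condition and carries no point-set information, and there exist open covers of non-paracompact spaces with contractible intersections admitting no subordinate partition of unity, for which the conclusion fails. In other words, the theorem as quoted in the paper for a bare ``topological space $X$'' is stated slightly more generally than it is true; numerability of $\mathcal{U}$ (automatic when $X$ is paracompact, which covers every TDA situation the paper considers) must be added to the hypotheses, and in your argument it should appear as an explicit assumption feeding the $p$-step rather than as a claimed corollary of ``good.'' With that repair your proof is sound.
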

The nerve $\mathcal{N(U)}$ mentioned in the above theorem is thought as a Cech complex in real applications of TDA, as later we will in Definition \ref{cech_complex}. Although, Cech complex deals with many high order interaction depending of the amount of data. Usually TDA aims to identify relations higher than simply one dimension interactions. Thence, the following inclusion, solves partially this problematic, reducing the high interaction to pairwise relation.

\begin{prop}\label{inclusion_1}
    Let $\epsilon >0$. We have the inclusions
    \[
     Rips(X,  \epsilon) \subseteq \check{C}ech(X, \epsilon) \subseteq Rips(X,  2\epsilon).
    \]
    
\end{prop}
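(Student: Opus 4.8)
The plan is to unwind both definitions and verify the two inclusions simplex-by-simplex, since an inclusion of simplicial complexes just means that every simplex of the smaller one occurs in the larger one — downward closure under faces is then automatic. Write a simplex as a finite subset $\sigma=\{x_0,\dots,x_k\}\subseteq X$. Recall that $\sigma\in Rips(X,\epsilon)$ precisely when all pairwise distances among the $x_i$ are at most $\epsilon$, while $\sigma\in\check{C}ech(X,\epsilon)$ precisely when the closed balls of radius $\epsilon$ about the $x_i$ share a common point, i.e.\ $\bigcap_{i}\bar B(x_i,\epsilon)\neq\varnothing$. With these conventions in hand, both inclusions reduce to elementary facts about the metric.

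For the first inclusion $Rips(X,\epsilon)\subseteq\check{C}ech(X,\epsilon)$, I would simply observe that any vertex of a Rips simplex is already a witness for the Čech condition: if $\sigma\in Rips(X,\epsilon)$ then $d(x_i,x_0)\le\epsilon$ for every $i$, so $x_0\in\bar B(x_i,\epsilon)$ for all $i$, whence $x_0\in\bigcap_i\bar B(x_i,\epsilon)$ and $\sigma\in\check{C}ech(X,\epsilon)$.

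For the second inclusion $\check{C}ech(X,\epsilon)\subseteq Rips(X,2\epsilon)$, given $\sigma\in\check{C}ech(X,\epsilon)$ I would pick a point $z\in\bigcap_i\bar B(x_i,\epsilon)$ and apply the triangle inequality: from $d(x_i,z)\le\epsilon$ and $d(z,x_j)\le\epsilon$ we get $d(x_i,x_j)\le d(x_i,z)+d(z,x_j)\le 2\epsilon$ for every pair, so $\sigma\in Rips(X,2\epsilon)$. This is the usual Vietoris--Rips sandwich argument and needs nothing beyond the triangle inequality.

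The one point I would treat carefully — and the only place an error could realistically creep in — is matching the conventions that get fixed in Definition~\ref{cech_complex} (open versus closed balls, factors of $2$), and, more seriously in the Finsler setting, the possible asymmetry of the distance function. If $d(x,y)\neq d(y,x)$, one must decide whether a Rips edge requires $\max\{d(x,y),d(y,x)\}\le\epsilon$ and whether the Čech complex is built from forward balls $\{y:d(x_i,y)\le\epsilon\}$ or backward balls; the triangle inequality still powers both inclusions, but one has to be sure the variable sitting in each slot of $d(\cdot,z)+d(z,\cdot)$ is the one controlled by the chosen ball. Modulo pinning down that bookkeeping, the proof is the two short paragraphs above.
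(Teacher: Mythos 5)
The paper states this proposition without proof, presenting it as a standard fact, so there is no paper argument to compare against. Your two-paragraph argument is the standard one and is correct in the metric-space setting: a vertex of a Rips simplex witnesses the \v{C}ech intersection, and a witness point for the \v{C}ech intersection together with the triangle inequality bounds all pairwise distances by $2\epsilon$. Your closing caution about conventions is well placed and in fact touches on a real gap in the paper's later Finsler framework: with the paper's Rips condition $F_{p_i}(p_j-p_i)\le\epsilon$ and balls $\mathcal{B}_\epsilon(p_i)=\{y:F_{p_i}(y-p_i)\le\epsilon\}$, the first inclusion still goes through (take $x_0$ as the witness), but the second does not follow from the subadditivity of a single $F_{p_i}$, since $F_{p_i}(p_j-p_i)\le F_{p_i}(p_j-z)+F_{p_i}(z-p_i)$ leaves the term $F_{p_i}(p_j-z)$ uncontrolled: the \v{C}ech hypothesis only bounds $F_{p_j}(z-p_j)$, and $F$ is neither reversible nor independent of the base point. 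Some uniform comparability or reversibility assumption on the family $\{F_x\}$ is needed to salvage the right-hand inclusion in the Finsler case, which the paper does not supply; your proof is exactly right for the metric-space statement that the paper most plausibly intends.
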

\par Even the main simplicial complex that rules the topology of the data is the Cech complex, we may use the above inclusion to reduce the complexity and approximate the results. When dealing with Representability property of simplicial complexes, we may infer some consequences about the homology of the addressed complexes. For instance, we will see that in a Finsler space, ta data can not form "simple holes".
\par Often, in many analysis of TDA, the choice of coefficients for homology is regarded as a minor issue, however, even if the choice $\mathbb{Z/}2\mathbb{Z}=\mathbb{Z}_2$ provides no trouble about orientation, this choice may contain noise in the form of torsion. We still do not know if torsion appears with high probability of being noise, but we propose a set of coefficients based on the Smith Normal Form of the boundary operators such that the original torsion part of the homology groups is not carried on the homologies over this new set of coefficients.
\par Our main conclusions are Corollaries~\ref{geometricdegree}
and~\ref{goodprimes}.
The former imposes a geometric constraint that rules out spurious
high-dimensional features in the filtration, while the latter establishes an
algebraic stability result: outside a finite set of primes, persistent homology
with $\mathbb{Z}_p$ coefficients reflects only free homology classes and hence
agrees with the rank invariants over $\mathbb{Q}$.
\section{Preliminaries and Background}
In this section, every time we use $\Omega$ we refer to it as a subset of some $\mathbb{R}^m$.
\begin{defi}
    An abstract simplicial complex $\Delta$ is a family of subsets of a finite set $V$, such that $\Delta$ is closed under inclusion, that is, if $\sigma \in \Delta$ and $\tau\subset\sigma$ then $\tau \in \Delta$. The word \textit{abstract} can be discarded in this manuscript. $V$ is called the vertex set of $\Delta$, and each $\sigma$ is called simplex.
\end{defi}
\par A geometric simplicial complex $\Delta$ is a finite collection of simplices in $\mathbb{R}^n$ such that:
\begin{enumerate}
    \item Every face of a simplex in $\Delta$ is also in $\Delta$.
    \item The intersection of any two simplices in $\Delta$ is either empty or a common face of both.
\end{enumerate}
The \emph{underlying space} of $\Delta$, denoted $|\Delta|$, is the union of all its simplices:
\[
|\Delta| = \bigcup_{\sigma \in \Delta} \sigma \subset \mathbb{R}^n.
\]
Intuitively, a geometric simplicial complex is a “gluing together” of points, line segments, triangles, tetrahedron, etc., in $\mathbb{R}^n$ so that simplices only meet nicely along shared faces. In fact, every $d$-dimensional abstract simplicial complex can be realized in $\mathbb{R}^{d+1}$, but this realization may fails to preserves the same topology of the corresponding abstract simplicial complex. Because of it, it is important to know that there is a Whitney's theorem \cite{whitney_1} type for simplicial complexes:
\begin{theo}
    Let $\Delta$ be a d-dimensional simplicial complex. Then, $|\Delta|$ can be embedded in $\mathbb{R}^{2 d+1}$, that is, there exists an embedding $|\Delta| \hookrightarrow \mathbb{R}^{2 d+1}$.
\end{theo}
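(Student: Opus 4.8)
The plan is to realize $\Delta$ geometrically by placing its vertices in general position on the moment curve in $\mathbb{R}^{2d+1}$. Write $V=\{v_0,\dots,v_N\}$ for the vertex set, pick distinct reals $t_0<t_1<\dots<t_N$, and define $\gamma\colon\mathbb{R}\to\mathbb{R}^{2d+1}$ by $\gamma(t)=(t,t^2,\dots,t^{2d+1})$, setting $f(v_i)=\gamma(t_i)$. First I would extend $f$ affinely over each simplex: if $\sigma=\{v_{i_0},\dots,v_{i_k}\}\in\Delta$, every point of the abstract simplex has unique barycentric coordinates $(\lambda_0,\dots,\lambda_k)$, and we send it to $\sum_j\lambda_j\gamma(t_{i_j})$. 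Since the barycentric coordinates of a point on a common face agree whether computed in $\sigma$ or in any larger simplex containing that face, these affine pieces glue to a well-defined continuous map $f\colon|\Delta|\to\mathbb{R}^{2d+1}$.

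The crux is the general-position lemma: any $2d+2$ distinct points on $\gamma$ are affinely independent. Indeed, an affine dependence among $\gamma(s_0),\dots,\gamma(s_{2d+1})$ would make the $(2d+2)\times(2d+2)$ matrix with rows $(1,s_j,s_j^2,\dots,s_j^{2d+1})$ singular, contradicting the Vandermonde identity $\prod_{i<j}(s_j-s_i)\neq 0$. Now any two simplices $\sigma,\tau\in\Delta$ have $\dim\sigma,\dim\tau\le d$, so $|\sigma\cup\tau|\le 2(d+1)=2d+2$; hence $f$ sends the vertices of $\sigma\cup\tau$ to an affinely independent subset of $\mathbb{R}^{2d+1}$.

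From here I would use the standard convexity fact that if a finite set $A\cup B\subset\mathbb{R}^n$ is affinely independent, then $\mathrm{conv}(A)\cap\mathrm{conv}(B)=\mathrm{conv}(A\cap B)$. Applied with $A=f(\text{vert }\sigma)$ and $B=f(\text{vert }\tau)$, this shows the geometric simplices $f(\sigma)$ and $f(\tau)$ meet exactly along $f(\sigma\cap\tau)$, a common face of both, so $f(\Delta)$ is a genuine geometric simplicial complex satisfying conditions (1)--(2). Affine independence of each individual simplex's vertex images makes $f|_\sigma$ injective, and the intersection property promotes this to injectivity of $f$ on all of $|\Delta|$. Finally, $|\Delta|$ is compact (a finite union of compact simplices) and $\mathbb{R}^{2d+1}$ is Hausdorff, so the continuous injection $f$ is a homeomorphism onto its image; that is, $f$ is the desired embedding $|\Delta|\hookrightarrow\mathbb{R}^{2d+1}$.

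I expect the main obstacle to be the bookkeeping in the convexity lemma --- verifying that affine independence of $A\cup B$ forces $\mathrm{conv}(A)\cap\mathrm{conv}(B)=\mathrm{conv}(A\cap B)$ --- together with checking that this local statement about pairs of simplices assembles into genuine global injectivity of $f$ rather than mere pairwise injectivity. The remaining ingredients (the Vandermonde computation, well-definedness of the glued map, and the compact-to-Hausdorff argument) are routine.
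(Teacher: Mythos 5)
Your proof is correct and complete: the moment-curve argument with the Vandermonde determinant for affine independence, the convexity lemma $\mathrm{conv}(A)\cap\mathrm{conv}(B)=\mathrm{conv}(A\cap B)$ for affinely independent $A\cup B$, the upgrade from pairwise to global injectivity via barycentric coordinates, and the compact-to-Hausdorff step are all sound. The paper states this theorem without proof, merely citing Whitney's manifold embedding theorem as an analogue, so there is no paper proof to compare against; your moment-curve realization is the standard proof of this classical result and fills the gap cleanly.
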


\par Let $\Delta$ be a finite simplicial complex. The \emph{$k$-th chain group} $C_k(\Delta)$ is the free abelian group (or vector space over a chosen field) generated by the $k$-simplices of $\Delta$. The \emph{boundary operator} $\partial_k : C_k(\Delta) \to C_{k-1}(\Delta)$ maps each $k$-simplex to the alternating sum of its $(k-1)$-dimensional faces. This operator satisfies $\partial_{k-1} \circ \partial_k = 0$. A \emph{$k$-cycle} is a $k$-chain with zero boundary, forming the subgroup $Z_k(\Delta) = \ker \partial_k$. A \emph{$k$-boundary} is a $k$-chain that is the boundary of a $(k+1)$-chain, forming the subgroup $B_k(\Delta) = \operatorname{im} \partial_{k+1}$. Since every boundary is a cycle, $B_k(\Delta) \subseteq Z_k(\Delta)$. The $k$-th homology group is then defined as the quotient
\[
H_k(\Delta) = Z_k(\Delta)/B_k(\Delta),
\]
whose elements represent equivalence classes of cycles modulo boundaries. Intuitively, $H_k(\Delta)$ captures the “$k$-dimensional holes” in $\Delta$ that are not filled by higher-dimensional simplices. The rank of $H_k(\Delta)$ is the \emph{$k$-th Betti number}, $\beta_k$, which counts independent $k$-dimensional holes: $\beta_0$ corresponds to connected components, $\beta_1$ to loops, and $\beta_2$ to voids or cavities. Related to finitely generated abelian groups and homology groups we have the two following preliminaries results, which will be used later.
\begin{theo}[Structure Theorem]\label{estrutura}
    Let $G$ be finitely-generated abelian group. Then there exist nonnegative integers $r, n_1,\ldots, n_t$ such that $ G \cong \mathbb{Z}^r \oplus (\bigoplus_i \mathbb{Z}_{n_i})$,
    where $\mathbb{Z}^r$ is the free part of $G$, the sum $\bigoplus_i \mathbb{Z}_{n_i}$ is the torsion part of $G$ and $r=\operatorname{rank}G$.
\end{theo}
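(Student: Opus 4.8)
The plan is to realise $G$ as a quotient of a finitely generated free abelian group and then to put the defining relations into a canonical diagonal form via the Smith Normal Form. Since $G$ is finitely generated, fix generators $g_1,\dots,g_n$ and let $\pi\colon\mathbb{Z}^n\twoheadrightarrow G$ be the surjection with $\pi(e_i)=g_i$; writing $K=\ker\pi$ we have $G\cong\mathbb{Z}^n/K$. The first key step is the lemma that every subgroup of $\mathbb{Z}^n$ is free abelian of rank at most $n$. I would prove this by induction on $n$: projecting onto the last coordinate gives a short exact sequence $0\to K\cap\mathbb{Z}^{n-1}\to K\to p(K)\to 0$ in which $p(K)\le\mathbb{Z}$ is either $0$ or infinite cyclic, hence free, so the sequence splits and $K$ is free of rank $\le n$ by the inductive hypothesis. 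In particular $K$ has a finite basis $v_1,\dots,v_m$ with $m\le n$.

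Next I would record the inclusion $K\hookrightarrow\mathbb{Z}^n$ by the integer matrix $A\in\mathbb{Z}^{n\times m}$ whose columns are the coordinates of $v_1,\dots,v_m$ in the standard basis. An invertible integer row operation amounts to changing the chosen basis of $\mathbb{Z}^n$, and an invertible integer column operation to changing the chosen basis of $K$; neither alters the isomorphism type of $\mathbb{Z}^n/K$. The arithmetic core is then the Smith Normal Form: there are $U\in\mathrm{GL}_n(\mathbb{Z})$ and $V\in\mathrm{GL}_m(\mathbb{Z})$ with $UAV$ equal to $\operatorname{diag}(d_1,\dots,d_m)$ padded by $n-m$ zero rows, where $1\le d_1\mid d_2\mid\cdots\mid d_m$ (all $d_i$ nonzero because $A$ has full column rank, $\{v_i\}$ being a basis). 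I would obtain this by the standard descent: move a nonzero entry of minimal absolute value into the $(1,1)$ slot, use the division algorithm along the first row and column to clear them, and, if some entry of the remaining block fails to be divisible by the current $d_1$, drag it into the first row and repeat; since the minimal absolute value strictly decreases at each forced step, the algorithm terminates.

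In the resulting bases, $K$ becomes $d_1\mathbb{Z}\oplus\cdots\oplus d_m\mathbb{Z}\oplus 0\oplus\cdots\oplus 0\subseteq\mathbb{Z}^n$, whence
\[
G\cong\mathbb{Z}^n/K\cong\bigoplus_{i=1}^{m}\mathbb{Z}/d_i\mathbb{Z}\ \oplus\ \mathbb{Z}^{\,n-m}.
\]
Discarding the factors with $d_i=1$ (which are trivial), renaming the surviving $d_i\ge 2$ as $n_1,\dots,n_t$, and setting $r=n-m$ yields $G\cong\mathbb{Z}^r\oplus\bigl(\bigoplus_i\mathbb{Z}_{n_i}\bigr)$ with the last summand finite, i.e.\ the torsion part, as claimed. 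That $r=\operatorname{rank}G$ follows because $G\otimes_{\mathbb{Z}}\mathbb{Q}\cong\mathbb{Q}^r$ kills the torsion summands, so $r$ is an isomorphism invariant. I expect the main obstacle to be enforcing the divisibility chain $d_1\mid\cdots\mid d_m$ in the Smith reduction: plain diagonalisation is easy, but guaranteeing the chain requires the extra manoeuvre of pulling a non-divisible entry back into the first row, together with the termination argument via the strictly decreasing minimal absolute value.
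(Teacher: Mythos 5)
The paper states this Structure Theorem as a background preliminary without supplying a proof, so there is no paper argument to compare against; your task here was essentially to supply a proof of a classical fact the authors take for granted. Your proof is correct and is the standard one: present $G$ as $\mathbb{Z}^n/K$, show by induction on $n$ that $K$ is free of rank $m\le n$, encode $K\hookrightarrow\mathbb{Z}^n$ as an integer matrix, and diagonalise via the Smith Normal Form to read off the cyclic and free summands. Each step holds up: the splitting of $0\to K\cap\mathbb{Z}^{n-1}\to K\to p(K)\to 0$ uses that $p(K)$ is free hence projective; full column rank of $A$ (because the $v_i$ are a basis, not merely a spanning set) correctly forces every $d_i\neq 0$; discarding the $d_i=1$ summands is harmless; and tensoring with $\mathbb{Q}$ gives the invariance of $r$. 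It is worth noting that your argument is internally consistent with the paper's own toolkit, since the authors later state the Smith Normal Form over a PID as Theorem~\ref{sfn} and use it to compute homology over $\mathbb{Z}$ — your proof makes explicit that the Structure Theorem they quote is itself a corollary of that decomposition. The one small presentational point is that the invariant-factor form you obtain (with $d_1\mid\cdots\mid d_m$) is slightly stronger than the bare direct-sum statement in the theorem, which does not impose the divisibility chain; this is a feature, not a defect, but you could remark that the chain condition gives uniqueness of the decomposition, which the theorem as stated does not claim.
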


\begin{theo}[Universal Coefficient Theorem]\label{uct}
    If $\boldsymbol{C}$ is a chain complex of free abelian groups, then there are natural short exact sequences
    \begin{equation*}
 0 \rightarrow \boldsymbol{H}_n(\Delta) \otimes G \rightarrow \boldsymbol{H}_n(\Delta , G) \rightarrow \operatorname{Tor}\left(\boldsymbol{H}_{n-1}(\Delta), G\right) \rightarrow 0
    \end{equation*}
    for all $n$ and all $G$, and these sequences split, though not naturally.
\end{theo}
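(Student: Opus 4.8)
The plan is to reproduce the classical derived-functor argument, the only nontrivial input being that every subgroup of a free abelian group is again free. Fix $n$ and write $Z_n=\ker\partial_n\subseteq C_n$ and $B_n=\operatorname{im}\partial_{n+1}\subseteq Z_n$. Since each $C_n$ is free abelian, $Z_n$ and $B_{n-1}$ are free, so the short exact sequence
\[
0\to Z_n\hookrightarrow C_n\xrightarrow{\ \partial_n\ }B_{n-1}\to 0
\]
splits. Regarding $Z=(Z_n)_n$ and the shifted boundaries $B'=(B_{n-1})_n$ as chain complexes with zero differential, these sequences assemble into a short exact sequence of chain complexes $0\to Z\to\boldsymbol C\to B'\to 0$ which is split in each degree; hence applying $-\otimes G$ keeps it exact, yielding a short exact sequence of chain complexes
\[
0\to Z\otimes G\to\boldsymbol C\otimes G\to B'\otimes G\to 0,
\]
in which the outer terms still carry the zero differential.

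Next I would take the long exact homology sequence of this short exact sequence. Because $Z\otimes G$ and $B'\otimes G$ have trivial differential, $H_n(Z\otimes G)=Z_n\otimes G$ and $H_n(B'\otimes G)=B_{n-1}\otimes G$, so the sequence reads
\[
\cdots\to B_n\otimes G\xrightarrow{\ \delta\ }Z_n\otimes G\to \boldsymbol H_n(\boldsymbol C\otimes G)\to B_{n-1}\otimes G\xrightarrow{\ \delta\ }Z_{n-1}\otimes G\to\cdots,
\]
where $\boldsymbol H_n(\boldsymbol C\otimes G)$ is what the statement denotes $\boldsymbol H_n(\Delta,G)$. A short diagram chase through the definition of the connecting map (lift an element of $B_{n-1}$ through $\partial_n$, apply the differential of $\boldsymbol C$, and read the result back in $Z_{n-1}$) identifies $\delta$ with $\iota\otimes\mathrm{id}_G$, where $\iota\colon B_\bullet\hookrightarrow Z_\bullet$ is the inclusion. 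Splitting the long exact sequence into short pieces then gives
\[
0\to\operatorname{coker}(\iota_n\otimes\mathrm{id})\to \boldsymbol H_n(\boldsymbol C\otimes G)\to\ker(\iota_{n-1}\otimes\mathrm{id})\to 0.
\]

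It remains to recognize the two outer terms. The sequence $0\to B_n\xrightarrow{\iota_n}Z_n\to\boldsymbol H_n(\boldsymbol C)\to 0$ is exact with $B_n,Z_n$ free, so right-exactness of $-\otimes G$ gives $\operatorname{coker}(\iota_n\otimes\mathrm{id})\cong\boldsymbol H_n(\boldsymbol C)\otimes G$; and the very same sequence is a free resolution of $\boldsymbol H_{n-1}(\boldsymbol C)$ of length one, whence $\ker(\iota_{n-1}\otimes\mathrm{id})\cong\operatorname{Tor}(\boldsymbol H_{n-1}(\boldsymbol C),G)$ by the definition of $\operatorname{Tor}$. This is precisely the asserted short exact sequence, and naturality in $G$ and in chain maps is inherited from the naturality of the long exact sequence, of the tensor product, and of $\operatorname{Tor}$. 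For the (non-natural) splitting I would take the retraction $r_n\colon C_n\to Z_n$ of $Z_n\hookrightarrow C_n$ coming from the first display, compose it with the quotient $Z_n\twoheadrightarrow\boldsymbol H_n(\boldsymbol C)$, tensor with $G$, and check that the resulting map $C_n\otimes G\to\boldsymbol H_n(\boldsymbol C)\otimes G$ annihilates boundaries and hence descends to a left inverse $\boldsymbol H_n(\boldsymbol C\otimes G)\to\boldsymbol H_n(\boldsymbol C)\otimes G$ of the first map in the sequence; since $r_n$ is not canonical, the splitting cannot be made natural. I expect the main obstacle to be the bookkeeping at exactly two points: pinning down $\delta=\iota\otimes\mathrm{id}$, and verifying that the retraction descends to homology after tensoring — both routine but delicate, since $\ker(\partial_n\otimes\mathrm{id})$ is in general strictly larger than $Z_n\otimes G$.
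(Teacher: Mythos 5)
The paper states the Universal Coefficient Theorem only as a background result in the preliminaries section and supplies no proof of its own, so there is nothing in the source to compare your argument against. Your write-up is the standard derived-functor proof (as in Hatcher or Weibel): split $0\to Z_n\to C_n\to B_{n-1}\to 0$ using freeness of $B_{n-1}$, tensor the resulting degreewise-split short exact sequence of complexes with $G$, pass to the long exact sequence, identify the connecting map with $\iota\otimes\mathrm{id}_G$, and recognize the outer terms via right-exactness of $\otimes$ and the length-one free resolution $0\to B_{n-1}\to Z_{n-1}\to H_{n-1}\to 0$ of $\operatorname{Tor}$. The splitting argument via the retraction $r_n$ is also correct, and your closing caveat that $\ker(\partial_n\otimes\mathrm{id})$ may strictly contain $Z_n\otimes G$ shows you have the right delicacy in mind. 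I see no gap; the proof is complete and exactly what a reference for this theorem would contain.
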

\par In this paper, we denote by $\tilde{Z_k}(\Delta)$ the set of cycles in $Z_k(\Delta)$ whose are formed by $k+2$ vertices. Note that $H_k(\Delta)$ contains classes of cycles, and each representative of these classes are independent cycles. So, not necessarily $\tilde{Z}_k(\Delta)$ will contains representatives of classes, because not all simple cycle is an independent cycle. Although, we are  interested on those with the minimal quantity of vertices. In Figure \ref{fig:cycles} we give a brief example of cycles in $\tilde{Z_k}(\Delta)$, when in that case the $1$-independent cycle if formed by the vertices $v_1, v_2, v_3$ and $v_5$.

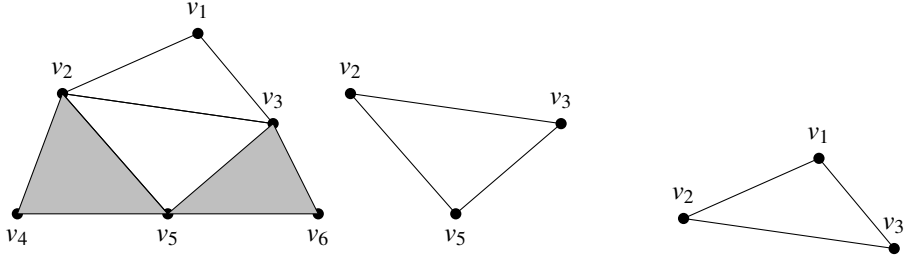
\begin{figure}[H]
    \begin{minipage}[t]{0.3\textwidth}
        \begin{tikzpicture}[scale=2]
        \node (v4) at (0,0) [circle,fill,inner sep=1.5pt,label=below:$v_4$] {};
        \node (v5) at (1,0) [circle,fill,inner sep=1.5pt,label=below:$v_5$] {};
        \node (v2) at (0.3,0.8) [circle,fill,inner sep=1.5pt,label=above:$v_2$] {};
        \node (v3) at (1.7,0.6) [circle,fill,inner sep=1.5pt,label=above:$v_3$] {};
        \node (v6) at (2,0) [circle,fill,inner sep=1.5pt,label=below:$v_6$] {};
        \node (v1) at (1.2, 1.2) [circle,fill,inner sep=1.5pt,label=above:$v_1$] {};
        
        % Fill triangles first (distinct areas)
        \fill[gray!50] (0,0) -- (1,0) -- (0.3,0.8) -- cycle;   % Triangle 1
        \fill[gray!50] (1,0) -- (1.7,0.6) -- (2,0) -- cycle;  % Triangle 2
        
        % Draw edges on top
        \draw (0,0) -- (1,0) -- (0.3,0.8) -- (0,0);
        \draw (1,0) -- (0.3,0.8) -- (1.7,0.6) -- (1,0);
        \draw (1,0) -- (2,0);
        \draw (1.7,0.6) -- (2,0);
        \draw (1.2, 1.2) -- (0.3,0.8) -- (1.7,0.6) -- (1.2, 1.2);
        \end{tikzpicture}
    \end{minipage}
    \hfill
    \begin{minipage}[t]{0.3\textwidth}
        \begin{tikzpicture}[scale=2]
        \node (v5) at (1,0) [circle,fill,inner sep=1.5pt,label=below:$v_5$] {};
        \node (v2) at (0.3,0.8) [circle,fill,inner sep=1.5pt,label=above:$v_2$] {};
        \node (v3) at (1.7,0.6) [circle,fill,inner sep=1.5pt,label=above:$v_3$] {};

        \draw (1,0) -- (0.3,0.8) -- (1.7,0.6) -- (1,0);
        \end{tikzpicture}
    \end{minipage}
        \hfill
    \begin{minipage}[t]{0.3\textwidth}
        \begin{tikzpicture}[scale=2]
        \node (v2) at (0.3,0.8) [circle,fill,inner sep=1.5pt,label=above:$v_2$] {};
        \node (v3) at (1.7,0.6) [circle,fill,inner sep=1.5pt,label=above:$v_3$] {};
        \node (v1) at (1.2, 1.2) [circle,fill,inner sep=1.5pt,label=above:$v_1$] {};
        
        \draw (1.2, 1.2) -- (0.3,0.8) -- (1.7,0.6) -- (1.2, 1.2);   
        \end{tikzpicture}
    \end{minipage}
    
   \caption{From the left to the right: A simplicial complex with two $1$-cycles. In the middle the cycle formed by the vertices $v_2, v_3$ and $ v_5$. In the very right is the other $1$-cycle with vertices $v_1, v_2$ and $ v_3$}
    \label{fig:cycles}
\end{figure}
\par In Topological Data Analysis some simplicial complexes have played an important role through the applicants of it, and also in the development of theoretical results. Namely these are are the Vietoris-Rips complex and Cech complex. As introduced before in the paper \cite{cavalcanti2025}, we will make use of the definitions of these complexes into a Finsler space.
\par Let $\Omega$ be a smooth $n$-dimensional manifold.  
A \emph{Finsler metric} on $\Omega$ is a function
\[
F : T\Omega \longrightarrow [0,\infty)
\]
such that for each $x\in\Omega$, the restriction $F_x := F(x,\cdot)$ on the tangent space $T_x\Omega$ satisfies:
\begin{enumerate}
    \item \textbf{Smoothness:} $F$ is $C^\infty$ on $T\Omega\setminus\{0\}$.
    \item \textbf{Positive homogeneity:} $F(x,\lambda v) = \lambda F(x,v)$ for all $\lambda>0$.
    \item \textbf{Strong convexity:} The square $F_x^2$ has a positive-definite Hessian with respect to $v$ on $T_x\Omega\setminus\{0\}$.
\end{enumerate}
The pair $(\Omega,F)$ is called a \emph{Finsler space}.

\begin{defi}\label{cech_complex}
    Let $X$ be a finite point set in a Finsler space $(\Omega,F)$. Then the Čech complex for $X$, attached to the parameter $\epsilon$, denoted by $\check{C}ech(X,\epsilon)$ will be the simplicial complex whose vertex set is $X$, and where $\{p_1, \ldots, p_{k+1}\}\subset X$ spans a $k$-simplex if 
    \[
    \mathcal{B}_\epsilon(p_1) \cap \cdots \cap \mathcal{B}_\epsilon(p_{k+1}) \neq \emptyset.
    \]
\end{defi}
\par There is a well-established idea in TDA that, in $\mathbb{R}^n$, taking convex balls around each sample point produces a \emph{thickened} version of that point. Viewing the entire point cloud through these balls is therefore equivalent to considering the thickened set of points. Consequently, the topology of the union of these balls reflects the presumed topology of this thickened point set. The topological invariants of this set of points are attacked in a diagram after a filtration \cite{chazal_1}.

\begin{defi}
    Let $X$ be a finite point set in a Finsler space $(\Omega,F)$. The Vietoris-Rips complex for $X$, attached to the parameter $\epsilon$, denoted by $Rips(X,\epsilon)$ will be the simplicial complex whose vertex set is $X$, and where $\{p_1, \ldots, p_{k+1}\}\subset X$ spans a $k$-simplex if 
    \[
    d_{p_i}(p_i, p_j)\leq \epsilon \quad \text{for all} \quad p_i, p_j \in \left\{p_1, \ldots, p_{k+1}\right\}.
    \]
\end{defi}

\begin{prop}
    Let $\Delta$ be a simplicial complex, and suppose $\tilde{Z_k}(\Delta)\neq \emptyset$, for some $k<d$. Then, $\Delta$ can not be the $Vietoris-Rips$ complex of any metric space neither any Finsler space.
\end{prop}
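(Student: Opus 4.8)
The plan is to exploit the defining feature of a Vietoris--Rips complex --- it is the clique (flag) complex of its $1$-skeleton --- which is exactly what forbids the sort of ``simple hole'' recorded by $\tilde{Z_k}$. I argue by contradiction. Suppose $\Delta = Rips(X,\epsilon)$ for some metric space $(X,d)$ (the Finsler case will be handled identically below), and choose a nonzero cycle $z \in \tilde{Z_k}(\Delta)$ supported on a set $S = \{v_0,\dots,v_{k+1}\}$ of exactly $k+2$ vertices, with $k \ge 1$ (the case $k=0$ is degenerate and is addressed at the end). The first step is a purely combinatorial lemma: the only $k$-simplices available on the vertex set $S$ are the $k+2$ facets $[v_0,\dots,\widehat{v_i},\dots,v_{k+1}]$ of the $(k+1)$-simplex spanned by $S$, and imposing $\partial z = 0$ --- using that each $(k-1)$-face $[\dots\widehat{v_i}\dots\widehat{v_j}\dots]$ is incident to exactly two of those facets and that the resulting incidence graph on the facets is connected --- forces every facet to occur in $z$ with coefficient $\pm c$ for a single $c \neq 0$. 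Equivalently, $Z_k(\partial\Delta^{k+1}) \cong \mathbb{Z}$ is generated by the fundamental cycle $\omega = \partial[v_0,\dots,v_{k+1}]$, whose support consists of \emph{all} $k+2$ facets, so no nonzero $k$-cycle on $S$ can omit any of them. Hence $z = c\,\omega$, and in particular every $(k+1)$-subset of $S$ is a simplex of $\Delta$.

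Second, I translate this into a statement about distances. Since each $(k+1)$-subset $T = S \setminus \{v_i\}$ is a simplex of $Rips(X,\epsilon)$, the definition forces $d(v_a,v_b) \le \epsilon$ for all $v_a, v_b \in T$. Because $k \ge 1$, we have $|S| = k+2 \ge 3$, so every pair $\{v_a,v_b\} \subseteq S$ is contained in at least one such $T$ (delete a third vertex). Therefore $d(v_a,v_b) \le \epsilon$ for \emph{every} pair in $S$, and by the very definition of the Vietoris--Rips complex the whole set $S$ then spans a $(k+1)$-simplex, i.e.\ $[v_0,\dots,v_{k+1}] \in \Delta$.

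Third, this produces the contradiction. Now $\partial[v_0,\dots,v_{k+1}] \in B_k(\Delta)$, hence $z = c\,\omega$ is a boundary --- contradicting that $z$ records an unfilled $k$-hole, i.e.\ that $z \in \tilde{Z_k}(\Delta)$ in the intended sense. Phrased without homology: in a flag complex the presence of the whole $k$-skeleton of a $(k+1)$-simplex already forces the top face, so there are no ``empty $(k+1)$-simplices'', whereas a nonempty $\tilde{Z_k}(\Delta)$ exhibits one. The hypothesis $k < d$ keeps the configuration nondegenerate and is automatically consistent with the conclusion, since the argument itself shows $\dim\Delta \ge k+1$. The Finsler case needs no new idea: $Rips(X,\epsilon)$ in $(\Omega,F)$ is built solely from the pairwise relation ``$F$-close within $\epsilon$'', and neither symmetry nor the triangle inequality was used above, so the identical chain of implications applies verbatim.

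I expect the main obstacle to be the first step: cleanly establishing that a nonzero $k$-cycle supported on exactly $k+2$ vertices must be a scalar multiple of $\partial[v_0,\dots,v_{k+1}]$, and hence that all of its $k$-faces already lie in $\Delta$. This is classical --- it amounts to the observation that the generator of $Z_k(\partial\Delta^{k+1}) \cong \mathbb{Z}$ involves every one of the $k+2$ facets --- but it is the one place that requires an argument rather than an unwinding of definitions. One should also flag that $k = 0$ must be excluded: for $0$-cycles the pairing step breaks (two vertices lie in no $(k+1)$-subset), and indeed the statement genuinely fails there, since the number of connected components of a Vietoris--Rips complex depends on the scale $\epsilon$.
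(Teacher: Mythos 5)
Your proof is correct and follows essentially the same strategy as the paper's: a $k$-cycle on $k+2$ vertices forces all pairwise distances to be $\le\epsilon$, so the Rips condition builds the full $(k+1)$-simplex and fills the hole. The paper, however, simply asserts the pairwise bound $d(p_i,p_j)\le\epsilon$ for all $p_i,p_j\in V$ without saying why; your first step --- showing that a nonzero $k$-cycle supported on exactly $k+2$ vertices must be a scalar multiple of $\partial[v_0,\ldots,v_{k+1}]$, so that all $k+2$ facets, and hence (for $k\ge 1$) all edges on $S$, are already simplices of $\Delta$ --- is exactly the missing justification, and it is the one nontrivial part of the argument. You also rightly flag two points the paper leaves implicit: that the resulting boundary $\partial[v_0,\ldots,v_{k+1}]\in B_k(\Delta)$ only yields a contradiction if $\tilde{Z}_k(\Delta)$ is understood to consist of \emph{unfilled} cycles (otherwise a bounded cycle on $k+2$ vertices would still be a cycle), and that $k=0$ must be excluded since the ``delete a third vertex'' step needs $|S|=k+2\ge 3$; the stated hypothesis $k<d$ does not rule out $k=0$.
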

\begin{proof}
    Suppose $\tilde{Z_k}(\Delta)\neq \emptyset$ and $\Delta = Rips(X,\epsilon)$ for a distance $d$ or a Finsler metric $F$. If $V = \{p_0, p_1, \ldots, p_{k+1}\}$ is the vertex set for a $k$ cycle in $\tilde{Z_k}(\Delta)$, then there is no $\Delta_{k+1}$ simplex on these vertices. Since $\Delta$ is a $Rips$ complex, then for $p_i \in V$,  $d(p_i, p_j)\leq \epsilon$, in the metric space case, or $d_{p_i}\left(p_i, p_j\right) = F_{p_i}(p_j-p_i) \leq \varepsilon$ in a Finsler space, which build a $\Delta_{k+1}$ simplex. This gives a contradiction.
\end{proof}
\par Let $\Bbbk$ be a fix ring and $I$ an interval of $\mathbb{R}$.
\begin{defi}
    A filtration of a finite set $X$ is a family $\{\Delta_\epsilon\}_{\epsilon\in I}$ of subcomplexes of the complete graph with vertices on $X$, such that $\Delta_\epsilon\subseteq\Delta_{\epsilon^{\prime}}$, whenever $\epsilon \leq \epsilon^{\prime}$.
\end{defi}
\par For each inclusion $\Delta_\epsilon\hookrightarrow\Delta_{\epsilon^{\prime}}$, there is the induced linear map $v_{\epsilon}^{\epsilon^{\prime}}:H_k(\Delta_\epsilon;\Bbbk)\to H_k(\Delta_{\epsilon^{\prime}};\Bbbk)$, for all $\epsilon \leq \epsilon^{\prime}$. Plus, one may see that $v_{\epsilon}^{ \varepsilon}=\operatorname{id}$, and $v_{\epsilon^{\prime}}^{\epsilon^{\prime \prime}}\circ v_{\epsilon}^{\epsilon^{ \prime}}=v_{\epsilon}^{\epsilon^{\prime \prime}}$. Following the notation and backgrounds in \cite{chazal2016structure}, this defines precisely a \textbf{Persistence Module} $\mathbb{V}$ over $I$, which may be seen categorically as a functor $\mathbb{V}:(I, \leq) \longrightarrow \textbf{Vect}_{\Bbbk}$.

\section{Topological Obstructions in Finsler Spaces}
\par Let $\Delta$ be a finite simplicial complex. We say that $\Delta$ is $d$-representable if there exists a family of convex sets $\mathcal{U}=\{U_1, \ldots, U_k\}$ in $\mathbb{R}^d$ such that $\mathcal{N}(\mathcal{U})$ is isomorphic to $\Delta$. For instance, the nerve of a convex finite family of sets in $\mathbb{R}^d$ is itself $d$-representable. In particular, a Čech complex in $\mathbb{R}^d$ given by a Finsler space, as in Definition \ref{cech_complex}, satisfies the $d$-representability condition, for any $d$.

\par Representability offers a natural framework for advancing TDA, by clarifying the distinction between topological features that admit geometric realization and those that are merely combinatorial. For instance, the following theorem makes it clear.
\begin{theo}[Perel'man, \cite{perelman_1} and Wegner, \cite{wegner_1}]
    Let $\Delta$ be a d-dimensional simplicial complex. Then $\Delta$ is (2d+1)-representable.
\end{theo}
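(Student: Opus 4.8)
The plan is to split the statement into two steps: first realize $\Delta$ as a genuine geometric simplicial complex in $\mathbb{R}^{2d+1}$, and then turn that realization into a finite family of convex sets in $\mathbb{R}^{2d+1}$ whose nerve is $\Delta$. The first step is routine: the Whitney-type theorem above only gives a topological embedding $|\Delta|\hookrightarrow\mathbb{R}^{2d+1}$, so I would instead place the $|V|$ vertices of $\Delta$ at points $x_1,\dots,x_{|V|}$ in general position in $\mathbb{R}^{2d+1}$ — e.g.\ on the moment curve $t\mapsto(t,t^2,\dots,t^{2d+1})$, so that any $2d+2$ of the $x_i$ are affinely independent — and realize each $\sigma\in\Delta$ as $\operatorname{conv}\{x_i:i\in\sigma\}$. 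Since two simplices of $\Delta$ together involve at most $2(d+1)=2d+2$ vertices, affine independence forces $\operatorname{conv}(\sigma)\cap\operatorname{conv}(\tau)=\operatorname{conv}(\sigma\cap\tau)$, so this is an honest geometric realization with underlying space $|\Delta|\subseteq\mathbb{R}^{2d+1}$.

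For the second step I would start from the classical observation that the closed stars $S_v$ of the vertices $v\in V$ in the first barycentric subdivision $\operatorname{sd}(\Delta)$ (realized linearly inside $|\Delta|$, with vertex set the barycenters $b_\sigma$) already form a cover with nerve $\Delta$: if $\tau=\{v_0,\dots,v_k\}\in\Delta$ then $b_\tau\in\bigcap_i S_{v_i}$, while conversely a common point of the $S_{v_i}$ lies in a simplex of $\operatorname{sd}(\Delta)$ whose chain has a top element $\rho\in\Delta$ with $v_0,\dots,v_k\in\rho$. The sets $S_v$ are compact but not convex, and the entire content of the theorem is to replace them by convex bodies $U_v$ with the same intersection pattern. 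A natural way to attempt this is to build the $U_v$ inductively over the simplices of $\Delta$ taken in order of non-decreasing dimension: start with $U_v=\{x_v\}$, and when a $k$-simplex $\sigma=\{v_0,\dots,v_k\}$ is processed — all of its proper faces, hence the correct lower-order intersections, already being in place — pick a generic auxiliary point $p_\sigma\in\mathbb{R}^{2d+1}$ and enlarge each of $U_{v_0},\dots,U_{v_k}$ to its convex hull with $p_\sigma$. Then $p_\sigma$ becomes a common point of $U_{v_0},\dots,U_{v_k}$, so $\sigma$ enters the nerve.

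The hard part — and the step I expect to be the real obstacle — is controlling these enlargements so that the new ``cone'' pieces $\operatorname{conv}(U_{v_j}\cup\{p_\sigma\})\setminus U_{v_j}$ do not create any spurious intersection with the remaining $U_u$; this is where the ambient dimension $2d+1$ is genuinely needed, since it is what leaves enough room to route the cones over the at-most-$d$-dimensional data past the other sets. That some care is unavoidable can be seen from the failure of the most naive attempt: simply taking $U_v=\operatorname{conv}(S_v)$ for an arbitrary general-position realization does not work, because several vertices sharing a common link can force $\operatorname{conv}(S_v)\cap\operatorname{conv}(S_w)\neq\emptyset$ for a non-edge $\{v,w\}$. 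Making the convexification precise — and, relatedly, choosing the realization in the first step appropriately — is exactly the technical heart of the argument of Perel'man and Wegner.
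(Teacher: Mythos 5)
The paper does not actually prove this theorem; it only states it with citations to Perel'man and Wegner, so there is no in-paper argument to compare your sketch against.

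On its own merits, your sketch correctly identifies the right skeleton: put the vertices on the moment curve in $\mathbb{R}^{2d+1}$ so that any $2d+2$ of them are affinely independent and $\Delta$ is realized geometrically, observe that the closed vertex stars in $\operatorname{sd}(\Delta)$ form a (non-convex) cover with nerve exactly $\Delta$, and then try to ``convexify'' these stars without changing the intersection pattern. Your remark that the naive choice $U_v=\operatorname{conv}(S_v)$ fails is also sound: once two non-adjacent vertices share enough common neighbours, the affine dependences among more than $2d+2$ vertices let the two convex hulls meet.

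Where the sketch has a genuine gap is exactly where you flag it, but the specific inductive remedy you propose is more problematic than you acknowledge. In the cone construction, after processing several simplices incident to $v$, the set $U_v$ is already a convex hull of many auxiliary points $p_\sigma$ and can have affine dimension far larger than $d$ --- potentially the full $2d+1$. Once that happens, the heuristic that a ``$(d+1)$-dimensional cone piece'' can be generically routed past a ``$d$-dimensional'' $U_u$ in $\mathbb{R}^{2d+1}$ no longer applies, because neither side of the comparison is bounded by $d$ any more. So the dimension count that you lean on to justify the choice of $p_\sigma$ breaks down after the first few steps, and it is not clear how to choose the $p_\sigma$'s so that no spurious $k$-fold intersection is created and simultaneously all required intersections persist. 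This is precisely the content of the Wegner/Perel'man argument, and it requires a genuinely different control mechanism (a careful choice of convex bodies adapted to the combinatorics of $\Delta$, not just a greedy cone-by-cone enlargement). As written, the sketch is an honest outline of the problem rather than a proof of the theorem.
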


Although it is now known that \(2d+1\) is the sharp lower bound for the representability of a \(d\)-dimensional complex, this question remained open for more than forty years after Wegner's theorem. In 2011, Tancer finally resolved it, proving the following statement \cite{martin_1}.

\begin{theo}[Tancer’s Tightness]
For every $d>0$ there exists a $d$-dimensional simplicial complex that is not $2d$-representable, so the $2d+1$ bound is optimal.
\end{theo}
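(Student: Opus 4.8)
The first point is why the standard homological obstruction is useless here. If $\Delta$ is $k$-representable, then by the Nerve Theorem $|\Delta|$ is homotopy equivalent to a finite union of convex sets in $\mathbb{R}^k$, and the same holds for every induced subcomplex of $\Delta$, since it corresponds to a subfamily of $\mathcal{U}$; as a finite union of convex sets in $\mathbb{R}^k$ has vanishing reduced homology in degrees $\ge k$, every $k$-representable complex is \emph{$k$-Leray}. But a $d$-dimensional complex, together with all of its induced subcomplexes, already has no homology above degree $d$, hence is automatically $2d$-Leray: the homological test is vacuous and cannot separate $2d$- from $(2d+1)$-representability. The embedding obstruction is not directly available either, because $k$-representability does not imply that $|\Delta|$ embeds in $\mathbb{R}^k$ (e.g. $\Delta^N$ is $1$-representable while $|\Delta^N|$ does not embed in $\mathbb{R}^2$). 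So the contradiction has to be extracted from convexity itself. The plan is: (i) construct, by induction on $d$, a $d$-dimensional complex $\Delta_d$; (ii) assume a convex realization $\mathcal{U} = \{U_v\}$ in $\mathbb{R}^{2d}$ with $\mathcal{N}(\mathcal{U}) \cong \Delta_d$, and derive a contradiction, the base case being $d=1$ and the inductive step being designed so that a realization in $\mathbb{R}^{2d}$ would ``slice down'' to a realization of $\Delta_{d-1}$ in $\mathbb{R}^{2d-2}$.

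For the construction, I would take $\Delta_1$ to be a rigid incidence configuration: a graph whose prescribed edges and non-edges leave essentially no geometric freedom in a planar convex realization, chosen so that realizing all of its pairwise intersections in $\mathbb{R}^2$ forces some prescribed triple of the $U_v$ to acquire a common point, contradicting the absence of the corresponding $2$-simplex --- equivalently, so that a planar realization would encode a drawing of a non-planar graph. To pass from $\Delta_{d-1}$ to $\Delta_d$ I would adjoin a small ``rigid gadget'' (a pair of new vertices, or a short chain of them, with a carefully chosen adjacency pattern) so that $\dim\Delta_d = \dim\Delta_{d-1} + 1$, the link of the gadget is $\Delta_{d-1}$, and the gadget is itself non-$2$-representable in a way ``orthogonal'' to $\Delta_{d-1}$, so that any realization of it is forced to span two independent ambient directions transverse to the part of the configuration carrying $\Delta_{d-1}$.

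The non-representability argument then has two parts. Base case: from $\mathcal{U}\subset\mathbb{R}^2$ realizing $\Delta_1$ one records, for each edge, a witnessing intersection point and, for each triangle, the fact that the three sets meet pairwise but not triple-wise; propagating these incidence constraints through the rigid configuration --- using that $\bigcup\mathcal{U}$ is an open subset of $\mathbb{R}^2$, hence homotopy equivalent to a wedge of circles --- produces a planar structure that the combinatorics of $\Delta_1$ forbids. Inductive step: given $\mathcal{U}\subset\mathbb{R}^{2d}$ realizing $\Delta_d$, the rigidity of the gadget forces the two new sets, and every set meeting both of them, to be ``thin'' along a fixed $2$-plane $P$; intersecting the sets indexed by the link with a generic affine subspace of codimension $2$ transverse to $P$ then yields convex sets in $\mathbb{R}^{2d-2}$ whose nerve is exactly $\Delta_{d-1}$, contradicting the inductive hypothesis. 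Combined with the Perel'man--Wegner theorem, which gives the upper bound, this shows $2d+1$ is sharp.

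The main obstacle is the rigidity step. Convex sets are very flexible, so $\Delta_d$ --- and especially the inductive gadget --- must be designed so that its \emph{combinatorics} pins down the \emph{geometry} of any convex realization tightly enough that the codimension-$2$ slice preserves all the required intersections and non-intersections rather than destroying some of them. Producing such a gadget, and proving the base case --- an explicit graph provably not equal to the nerve of any planar convex family, a fact invisible to homology --- is exactly the content that remained open after Wegner's theorem, and is where essentially all the work lies.
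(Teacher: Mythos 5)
The paper does not prove this theorem; it is quoted as an external result of Tancer \cite{martin_1}, so there is no in-paper argument to compare against. Judged on its own terms, your write-up is a research plan rather than a proof. You do correctly explain why the homological (Leray) obstruction is vacuous here --- a $d$-dimensional complex and all its induced subcomplexes have trivial reduced homology above degree $d$, hence trivially in degrees $\geq 2d$, so nothing separates $2d$- from $(2d+1)$-representability by that route --- and you rightly observe that $k$-representability does not imply embeddability of $|\Delta|$ in $\mathbb{R}^k$. But the actual mathematical content, namely an explicit $d$-dimensional complex and a demonstration that no convex family in $\mathbb{R}^{2d}$ has it as its nerve, is replaced by placeholders: ``rigid gadget,'' ``carefully chosen adjacency pattern,'' ``pins down the geometry.'' You acknowledge this yourself in the final paragraph, which effectively concedes that the theorem remains unproven in your sketch.

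There is also a concrete gap in the one step you do describe in some detail, the inductive ``slice down'' from $\mathbb{R}^{2d}$ to $\mathbb{R}^{2d-2}$. Intersecting a convex family with a codimension-$2$ affine subspace preserves \emph{empty} intersections (a subfamily with no common point still has none after slicing) but does not preserve \emph{nonempty} intersections: a subfamily that does meet may have its common part entirely missed by the slice. So the nerve of the sliced family is in general only a subcomplex of the link, not the link itself, and your claim that one obtains ``convex sets in $\mathbb{R}^{2d-2}$ whose nerve is exactly $\Delta_{d-1}$'' needs a genuine argument --- precisely the rigidity you defer. Finally, this inductive scheme does not appear to match Tancer's actual argument, which constructs a specific family of complexes directly and argues geometrically about any putative convex realization in $\mathbb{R}^{2d}$ rather than reducing dimension by two at a time; if you intend a different route from Tancer's you must at minimum supply the base case and the slicing lemma, and as written neither is present.
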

Most current TDA implementations do not incorporate embeddings into spaces that reflect the underlying geometry of the data, and therefore may fail to capture its true topological features. In the following discussion we use the Finsler metric to avoid that phenomenon.
\begin{defi}
    We say that a simplicial complex $\Delta$ captures the topology of $X$ if $\Delta$ is the $\check{C}ech$ complex, $\check{C}ech(X,\epsilon)$, and the nerve theorem applies.
\end{defi}
\begin{prop}\label{obstruction}
    Let $\Delta$ be a simplicial complex with vertex set $X$ in $\mathbb{R}^n$ and suppose $\tilde{Z_k}(\Delta) \neq \emptyset$.  If $k \geq n+1$, then $\Delta$ does not capture the topology of $X$ in a Finsler space.
\end{prop}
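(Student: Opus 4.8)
The plan is to argue by contradiction: from $\tilde{Z_k}(\Delta)\neq\emptyset$ I extract a family of $k+2$ convex balls in $\mathbb{R}^n$ whose intersection pattern is incompatible with Helly's theorem once $k\geq n+1$. So suppose $\Delta$ captures the topology of $X$ in some Finsler space $(\Omega,F)$ with $X\subseteq\Omega\subseteq\mathbb{R}^n$; in particular $\Delta=\check{C}ech(X,\epsilon)$ for some $\epsilon>0$. First I would unravel the hypothesis. Fix a nonzero $k$-cycle supported on exactly $k+2$ vertices $V=\{p_0,\dots,p_{k+1}\}$. Any such cycle must be a scalar multiple of the fundamental cycle $\partial[p_0\cdots p_{k+1}]$ of the $(k+1)$-simplex $\sigma$ on $V$, and since that generator has all $k+2$ coefficients nonzero, every $k$-face of $\sigma$ belongs to $\Delta$, while $\sigma$ itself does not --- this last point being exactly what ``$\tilde{Z_k}(\Delta)\neq\emptyset$'' is meant to record, precisely as in the Vietoris--Rips obstruction above, where ``no $(k+1)$-simplex on these vertices'' is the operative fact.

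Next I would pass to the $\epsilon$-balls. Since $\Delta=\check{C}ech(X,\epsilon)$ is downward closed, the presence of all $k$-faces on $V$ gives $\bigcap_{p\in S}\mathcal{B}_\epsilon(p)\neq\emptyset$ for every $S\subseteq V$ with $|S|\leq k+1$. The hypothesis $k\geq n+1$ yields $n+1\leq k+1$, so in particular \emph{every} $(n+1)$-element subfamily of $\{\mathcal{B}_\epsilon(p_0),\dots,\mathcal{B}_\epsilon(p_{k+1})\}$ has a common point. Moreover each $\mathcal{B}_\epsilon(p_i)$ is a convex subset of $\mathbb{R}^n$: strong convexity of $F$ forces the unit ball of the Minkowski norm $F_{p_i}$ to be convex, and $\mathcal{B}_\epsilon(p_i)$ is a scaled translate of it --- this is the very fact that makes a Finsler Čech complex $n$-representable. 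Applying Helly's theorem in $\mathbb{R}^n$ to these $k+2\geq n+2$ convex sets, all of whose $(n+1)$-fold intersections are nonempty, we conclude $\bigcap_{i=0}^{k+1}\mathcal{B}_\epsilon(p_i)\neq\emptyset$. Hence $\sigma$ spans a $(k+1)$-simplex of $\check{C}ech(X,\epsilon)=\Delta$, contradicting the previous paragraph. Therefore no Finsler realization of $\Delta$ can capture the topology of $X$.

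The step I expect to carry the real weight is the first one: recording precisely which combinatorial configuration $\tilde{Z_k}(\Delta)\neq\emptyset$ furnishes (all $k$-faces of $\sigma$ present, $\sigma$ absent). After that the argument is purely Helly, and the dimension hypothesis $k\geq n+1$ enters only to guarantee $n+1\leq k+1$, so that ``all $(n+1)$-fold intersections are nonempty'' comes for free from downward closure. If one prefers a homological packaging, the induced subcomplex on $V$ is $\partial\Delta^{k+1}\cong S^{k}$, carrying a nonzero class in degree $k\geq n$; but by the nerve theorem that subcomplex is homotopy equivalent to a union of convex balls in $\mathbb{R}^n$, whose homology vanishes in degrees $\geq n$ --- an ``$n$-Leray'' obstruction. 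The Helly formulation is cleaner and self-contained, so I would present that one.
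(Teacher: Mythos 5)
Your argument is correct and follows the same route as the paper: identify the $k+2$ Finsler $\epsilon$-balls centred at the vertices of the cycle, observe they are convex, note that downward closure of the Čech complex together with $k\geq n+1$ makes every $(n+1)$-fold intersection nonempty, and then apply Helly's theorem in $\mathbb{R}^n$ to force the full intersection to be nonempty, producing the forbidden $(k+1)$-simplex. Your first step --- pinning down that a $k$-cycle on $k+2$ vertices is necessarily a nonzero multiple of $\partial[p_0\cdots p_{k+1}]$, hence that \emph{all} $k$-faces appear with nonzero coefficient --- is actually stated more carefully than in the paper, whose wording of the claim (``any $k$ simplices of these $k+2$ simplices share a $(k-1)$-simplex'') is a little garbled; your version makes the Helly hypothesis completely transparent. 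The closing Leray/nerve-theorem reformulation is a nice observation but is not in the paper and is not needed.
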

\begin{proof}
    Let $\Delta$ be as in the hypothesis. Suppose that $\Delta$ capture the topology of $X$, as a data set in some Finsler space with metric $F$. By the Nerve theorem, we shall assume that $\Delta=\check{C} e \operatorname{ch}(X, \varepsilon)$, for some $\epsilon>0$. Writing $\mathcal{B}_{\varepsilon}\left(p_i\right)$ as the ball of radius $\epsilon$, centered at $p_i \in V$, take a $k$-cycle $z \in\tilde{Z_k}(\Delta)$, lets say the cycle with vertices $p_0, p_1,\ldots,p_{k+1}$ and let $\mathcal{C}$ be the family of balls, given by $F$, of radius $\epsilon$ and centered at these points. Because $F_{p_i}$ invariably produces convex balls, then $\mathcal{C}$ is a convex family.   
    \begin{claim}
        Any $n+1$ balls of $\mathcal{C}$ has non-empty intersection.
    \end{claim}
    By definition, there are $k+2$ simplices of dimension $k$ being glued to form $z$. Any $k$ simplices of these $k+2$ simplices share a $(k-1)$-simplex. Then, let $p_{i_0}, p_{i_1}, \ldots, p_{i_k}$ be the vertices of this $(k-1)$-simplex, and by construction of $\check{C} e \operatorname{ch}(X, \varepsilon)$ we have
    \[
    \bigcap_{j=0}^{k} \mathcal{B}_{\varepsilon}\left(p_{i_j}\right) \neq \emptyset.
    \]
\par Hence, any choice of $k \geq n+1$ balls in $\mathcal{C}$ satisfies the Helly's theorem condition in $\mathbb{R}^n$. Therefore, $\bigcap_{C \in \mathcal{C}} C \neq \emptyset$. And by definition, $\{p_0, p_1, \ldots, p_{k+1}\}$ builds a $(k+1)$-simplex of $\Delta$ instead of a $k$-cycle. Thus, $\Delta$ can not be $\check{C} e \operatorname{ch}(X, \varepsilon)$, and consequently can not capture the topology of $X$ in a Finsler space with the metric $F$.
\end{proof}
\begin{coro}\label{geometricdegree}
Let $X \subset \mathbb{R}^n$ be a finite set and let 
$\{\Delta_\varepsilon\}_{\varepsilon>0}$ be the \v{C}ech filtration induced by a Finsler metric.
Then, for any field $\Bbbk$, we have
\[
H_k(\Delta_\varepsilon;\Bbbk)=0
\quad\text{for all }\varepsilon>0\text{ and all }k>n.
\]
In particular, the $k$-th persistent homology module is trivial for all $k>n$.
\end{coro}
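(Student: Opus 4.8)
The plan is to push the statement through the Nerve Theorem onto the ambient Euclidean space, where the bound on homological degree is classical. Fix $\varepsilon>0$ and a field $\Bbbk$, and set $\Omega_\varepsilon:=\bigcup_{p\in X}\mathcal{B}_\varepsilon(p)\subseteq\mathbb{R}^n$, where $\mathcal{B}_\varepsilon(p)$ is the Finsler ball of radius $\varepsilon$ about $p$. By definition $\Delta_\varepsilon=\check{C}ech(X,\varepsilon)$ is the nerve of the family $\mathcal{C}=\{\mathcal{B}_\varepsilon(p)\}_{p\in X}$. Each $\mathcal{B}_\varepsilon(p)$ is convex --- this is precisely the convexity of Finsler balls already used in the proof of Proposition~\ref{obstruction} --- and every nonempty intersection of finitely many members of $\mathcal{C}$ is again convex, hence contractible; so $\mathcal{C}$ is a good cover of $\Omega_\varepsilon$. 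The Nerve Theorem then gives a homotopy equivalence $|\Delta_\varepsilon|\simeq\Omega_\varepsilon$, and therefore isomorphisms $H_k(\Delta_\varepsilon;\Bbbk)\cong H_k(|\Delta_\varepsilon|;\Bbbk)\cong H_k(\Omega_\varepsilon;\Bbbk)$ for every $k$ (homology being a homotopy invariant for arbitrary coefficients).

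It remains to show $H_k(\Omega_\varepsilon;\Bbbk)=0$ for $k>n$, and this is the only place where genuine input enters: a finite union of convex sets in $\mathbb{R}^n$ cannot support homology in degree $>n$. With open Finsler balls, $\Omega_\varepsilon$ is itself an open subset of $\mathbb{R}^n$, hence a (possibly disconnected) $n$-manifold each of whose components is non-compact --- being open, bounded and nonempty, none of them is closed in $\mathbb{R}^n$ --- so $H_k(\Omega_\varepsilon;\Bbbk)=0$ already for $k\ge n$, in particular for $k>n$. With closed Finsler balls one obtains the same vanishing for the compact set $\Omega_\varepsilon$ via Alexander duality in $S^n$ (equivalently, by writing $\check{H}^{\,\bullet}(\Omega_\varepsilon;\Bbbk)$ as a direct limit over open Euclidean neighbourhoods, each of which, being a non-compact $n$-manifold, has no cohomology in degrees $\ge n$, and using that $\Omega_\varepsilon$ has the homotopy type of the finite complex $\Delta_\varepsilon$, so that its \v{C}ech and singular theories coincide). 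Combining the two paragraphs yields $H_k(\Delta_\varepsilon;\Bbbk)=0$ for all $\varepsilon>0$ and all $k>n$.

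Finally, for the persistence module assertion: fixing $k>n$, every vector space of $\mathbb{V}_k=\{H_k(\Delta_\varepsilon;\Bbbk)\}_{\varepsilon>0}$ vanishes by the above, hence all of its structure maps $v_\varepsilon^{\varepsilon'}$ are zero and $\mathbb{V}_k$ is the trivial persistence module (its barcode is empty).

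I expect the Euclidean vanishing fact of the second paragraph to be the conceptual crux, even though it is entirely standard: the geometric intuition ``a finite union of convex sets in $\mathbb{R}^n$ has no $(n{+}1)$-dimensional hole'' must be made rigorous either through the manifold structure of $\Omega_\varepsilon$ (open-ball case) or through Alexander duality / the ENR structure (closed-ball case). The reduction via the Nerve Theorem, the verification that $\mathcal{C}$ is a good cover, and the passage to arbitrary field coefficients are routine; alternatively one could bypass the Nerve Theorem by invoking Wegner's theorem that the nerve of a finite family of convex sets in $\mathbb{R}^n$ is $n$-Leray, which gives $\widetilde{H}_k(\Delta_\varepsilon;\Bbbk)=0$ for $k\ge n$ directly.
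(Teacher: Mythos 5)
Your proof is correct, but it takes a genuinely different route from the paper's one-line invocation of Proposition~\ref{obstruction}. The paper's argument is combinatorial and local: Proposition~\ref{obstruction} shows, via Helly's theorem in $\mathbb{R}^n$, that a \v{C}ech complex built from convex Finsler balls cannot contain a ``simple'' $k$-cycle on $k{+}2$ vertices (that is, $\tilde Z_k(\Delta_\varepsilon)=\emptyset$ once $k\ge n{+}1$), because the balls around any $n{+}1$ of those vertices already meet, so Helly forces the full $(k{+}1)$-simplex into the complex. You instead pass through the Nerve Theorem to replace $\Delta_\varepsilon$ by the union of balls $\Omega_\varepsilon\subset\mathbb{R}^n$ and kill $H_k$ for $k>n$ by ambient topology (a bounded open subset of $\mathbb{R}^n$ is a non-compact $n$-manifold in the open-ball case; Alexander duality in the closed-ball case). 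Your route is in fact the more complete one: emptiness of $\tilde Z_k$ --- absence of simple cycles with exactly $k{+}2$ vertices --- does not by itself imply $H_k(\Delta_\varepsilon)=0$, since nontrivial $k$-cycles supported on more than $k{+}2$ vertices are not ruled out, whereas your topological argument (or, equivalently, the $n$-Leray property of nerves of finite convex families in $\mathbb{R}^n$, the Wegner alternative you mention at the end) gives the full vanishing, and in fact the sharper $H_k=0$ for all $k\ge n$. The case split between open and closed balls is a bit more than is strictly needed; the $n$-collapsibility/$n$-Leray phrasing avoids it entirely and is the cleanest way to package the underlying Helly-type input that both you and the paper are drawing on.
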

\begin{proof}
    Let $\mathbb{V}$ be the persistence module $H_k\left(\Delta_{\bullet} ; \Bbbk\right)$. The result follows immediately from Proposition \ref{obstruction}.
\end{proof}

\section{Coefficient on Persistence Homology Module}
\par Before going specially into the main objects whose will be discussed in this section, we may present a fantastic decomposition made by the great mathematician \textit{Henry Smith} in his work \cite{smithform}.
\begin{theo}\label{sfn}
    Suppose $R$ is a PID and let $A$ be a nonzero $m \times n$ matrix over $R$. Then there exist invertible matrices $m \times m$ and $n \times n$ matrices $U,V$ over $R$ such that the product $UAV$ is 
    \[
\left(
\begin{array}{c|c}
\begin{matrix}
\alpha_1 & & 0 \\
& \ddots & \\
0 & & \alpha_r
\end{matrix}
& \boldsymbol{0}_{r\times n-r} \\ \hline
\boldsymbol{0}_{m-r \times r} & \boldsymbol{0}_{m-r \times n-r }
\end{array}
\right)=\operatorname{diag}(\alpha_1, \ldots, \alpha_r,0 \ldots, 0) 
    \]
where each $\alpha_i$ in the diagonal satisfies $\alpha_i \mid \alpha_{i+1}$, for all $1 \leq i<r$. These values are determined by $\alpha_i=\displaystyle\frac{d_i(A)}{d_{i-1}(A)}$, once known that $d_i(A)$ is the greatest common divisor of the determinants of all $i\times i$ minors of $A$. The values $\alpha_i$ are called the \textbf{elementary divisors} of $A$. By convention we set $d_0(A)=1$.
\end{theo}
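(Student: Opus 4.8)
\emph{Existence of the diagonalisation.}
The plan is to prove the statement in two stages: first produce \emph{some} diagonal form carrying the divisibility chain by an explicit reduction, and then pin down the entries via an invariance argument that also yields uniqueness up to units. I would call two matrices equivalent when one is carried to the other by left and right multiplication by invertible matrices, and record that this equivalence is generated by three invertible operations: permutations of rows or columns; adding an $R$-multiple of one row (column) to another; and, for entries $a,b$ lying in a single column, replacing that pair by $\begin{pmatrix}d\\0\end{pmatrix}$ via left multiplication by a block matrix built from $T=\begin{pmatrix} x & y \\ -b/d & a/d\end{pmatrix}$, where $d=\gcd(a,b)=xa+yb$ and $\det T=1$ (and the transposed version for rows). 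Since a PID is a UFD, each nonzero $a\in R$ has a length $\ell(a)\in\mathbb{Z}_{\geq 0}$, the number of prime factors counted with multiplicity, vanishing exactly on units and dropping strictly under passage to a proper divisor. Among all matrices equivalent to $A$ with nonzero $(1,1)$ entry I would choose one, again written $(a_{ij})$, minimising $\ell(a_{11})$. I then expect $a_{11}$ to divide every entry: if it missed some $a_{i1}$, combining rows $1$ and $i$ by the $\gcd$-operation would put a proper divisor of $a_{11}$ in the corner, contradicting minimality, so $a_{11}$ divides the first column and, symmetrically, the first row; after clearing those by transvections (which leave $a_{11}$ fixed), if an interior entry $a_{ij}$ were not a multiple of $a_{11}$ I would add row $i$ to row $1$ to drag it into the first row and combine columns for the same contradiction. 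This produces the block form $\operatorname{diag}(a_{11},A')$ with $a_{11}$ dividing every entry of $A'$; setting $\alpha_1=a_{11}$ and recursing on $A'$ until it vanishes yields $\alpha_1\mid\alpha_2\mid\cdots\mid\alpha_r$ for free, since every divisor extracted at a later stage is a multiple of $\alpha_1$.

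\emph{Identifying the invariants.}
For $0\le i\le\min(m,n)$ write $d_i(A)$ for a gcd of all $i\times i$ minors of $A$, with $d_0(A)=1$, defined up to a unit. From the Cauchy–Binet formula, each $i\times i$ minor of $UA$ is an $R$-linear combination of $i\times i$ minors of $A$, hence $d_i(A)\mid d_i(UA)$; applying this to $A=U^{-1}(UA)$ gives the reverse divisibility, so $d_i(UA)=d_i(A)$ up to a unit, and likewise $d_i(AV)=d_i(A)$, so $d_i$ is an invariant of the equivalence class. Evaluating it on $D=\operatorname{diag}(\alpha_1,\dots,\alpha_r,0,\dots,0)$: a nonzero $i\times i$ minor of $D$ must select equal row and column index sets $S$ and equals $\prod_{j\in S}\alpha_j$, and since $\alpha_1\mid\cdots\mid\alpha_r$ every such product is a multiple of $\alpha_1\cdots\alpha_i$, which is itself realised by $S=\{1,\dots,i\}$; hence $d_i(D)=\alpha_1\cdots\alpha_i$ for $i\le r$ and $d_i(D)=0$ for $i>r$. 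Combining with invariance gives $d_i(A)=\alpha_1\cdots\alpha_i$, whence $\alpha_i=d_i(A)/d_{i-1}(A)$ for $1\le i\le r$, that $r=\operatorname{rank}A$, and that the $\alpha_i$ are determined up to units.

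\emph{Main obstacle.}
The delicate point is the termination of the reduction over a \emph{general} PID, where no Euclidean norm is available: the whole argument rests on replacing ``size'' by the factorisation length $\ell$ and checking that it drops strictly at every non-trivial step, which is exactly why the minimal-$\ell$ matrix is installed at the outset rather than running a naive descent. The Cauchy–Binet bookkeeping in the second stage is routine but must be phrased so that the expansion coefficients visibly lie in $R$, since that is what licenses the divisibility $d_i(A)\mid d_i(UA)$ and thereby upgrades the algorithm's output to the canonical form with the stated divisor formula.
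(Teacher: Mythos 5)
The paper does not prove Theorem~\ref{sfn}: it states the Smith normal form theorem as classical background and cites Smith's 1861 paper, so there is no internal proof to compare against. Your argument is correct and follows the standard route for general PIDs. In the existence half you correctly replace a Euclidean size by the factorisation length $\ell$ (well defined because a PID is a UFD, $\mathbb{Z}_{\ge 0}$-valued so a minimiser exists), minimise $\ell(a_{11})$ over the equivalence class, and derive that $a_{11}$ divides the first row and column --- otherwise the $2\times 2$ $\gcd$ block $T$, which has determinant $1$ and hence is invertible, would place a proper divisor of $a_{11}$ in the corner and strictly drop $\ell$ --- and then, after clearing and dragging a bad interior entry into the first row, divides all remaining entries; the divisibility chain propagates because $\alpha_1$ divides every entry of $A'$, hence every entry of anything obtained from $A'$ by $R$-linear row and column operations, in particular the later $\alpha_j$. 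The identification half is also sound: Cauchy--Binet gives $d_i(A)\mid d_i(UA)$ because the expansion coefficients lie in $R$ (as you flag, this is the crucial point that makes it a divisibility of elements of $R$ and not just an equality over the fraction field), invertibility gives the reverse, and for the diagonal form the observation that $S=\{s_1<\cdots<s_i\}\subseteq\{1,\dots,r\}$ forces $s_j\ge j$, hence $\alpha_j\mid\alpha_{s_j}$, correctly shows that $\alpha_1\cdots\alpha_i$ divides every nonzero $i\times i$ minor and is itself achieved, yielding $d_i(D)=\alpha_1\cdots\alpha_i$ and therefore $\alpha_i=d_i(A)/d_{i-1}(A)$.
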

\par We say that such diagonal matrix in Theorem \ref{sfn} is the \textbf{Smith normal Form} or simply SNF of the matrix $A$. Then, particularly, for all integer matrix $A$ one can calculate the Smith normal form of $A$.

\par For each boundary map $\boldsymbol{\partial}_k: \mathbf{C}_k \longrightarrow \mathbf{C}_{k-1}$, let us skip the use of the subscript $k$ in $\partial_k$, when the chain complexes are explicit shown. For the further aim of this chapter, consider the following diagram. 
\[
\begin{array}{ccccccc}
\boldsymbol{0}\longrightarrow{} \cdots & \boldsymbol{C}_{k+2}\xrightarrow{\boldsymbol{\partial}} & \boldsymbol{C}_{k+1} & \xrightarrow{\boldsymbol{\partial}} & \boldsymbol{C}_{k} & \xrightarrow{\boldsymbol{\partial}} & \cdots \longrightarrow{} \boldsymbol{0}\\
& \big\downarrow \boldsymbol{\partial}& \big\downarrow \boldsymbol{\partial} &  & \big\downarrow \boldsymbol{\partial} & & \\
\boldsymbol{0}\longrightarrow{}\cdots &\boldsymbol{C}_{k+1} \xrightarrow{\boldsymbol{\partial}} & \boldsymbol{C}_k & \xrightarrow{\boldsymbol{\partial}} & \boldsymbol{C}_{k-1} & \xrightarrow{\boldsymbol{\partial}} & \cdots \longrightarrow{} \boldsymbol{0}
\end{array}
\]
\par If each $\boldsymbol{C}_{k}(\Delta, \mathbb{Z)}$ is spanned by $m_k$ $k$-simplices, we may do the identification $\boldsymbol{C}_{k}\cong \mathbb{Z}^{m_k}$. Letting $A_{m_k}$ denote the matrix $[\boldsymbol{\partial}]$ of $\boldsymbol{\partial}: \mathbf{C}_k \longrightarrow \mathbf{C}_{k-1}$ in terms of generators simplices of $\mathbf{C}_k$ and $\mathbf{C}_{k-1}$. Thence, there is a natural identification of the boundary map as $A_{m_k}:\mathbb{Z}^{m_k}\longrightarrow \mathbb{Z}^{m_{k-1}}$. Since the entries of $A_{m_k}$ are taken from the set $\{-1,0,1\}$, we may decompose $A^{m_k}$ in the SNF. Now, we give two established results as lemmas for our proposes.
\begin{lema}
    If $\alpha_1, \alpha_2,\ldots, \alpha_r$ are the elementary divisors of an $m\times n$ integer matrix  $A$. Then, the $\operatorname{co-kernel}$ of $A:\mathbb{Z}^m \longrightarrow\mathbb{Z}^n$ satisfies $\operatorname{coker}(A):=\mathbb{Z}^n / \operatorname{im}(A) \cong \bigoplus_{i=1}^r \mathbb{Z}_{\alpha_i} \oplus \mathbb{Z}^{n-r}$.
\end{lema}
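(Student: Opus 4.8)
The plan is to pass through the Smith normal form of $A$ and then read off the cokernel of the resulting diagonal matrix by inspection. First I would apply Theorem~\ref{sfn} to produce invertible integer matrices $U$ and $V$ (of sizes $m\times m$ and $n\times n$) with $UAV = D$, where $D=\operatorname{diag}(\alpha_1,\dots,\alpha_r,0,\dots,0)$ is the SNF of $A$; since $U\in GL_m(\mathbb{Z})$ and $V\in GL_n(\mathbb{Z})$, multiplication by them defines $\mathbb{Z}$-module automorphisms of $\mathbb{Z}^m$ and of $\mathbb{Z}^n$ respectively.

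Next I would argue that these automorphisms identify $\operatorname{coker}(A)$ with $\operatorname{coker}(D)$. The factor acting on the domain of $A$ does not change the image at all, because precomposing a homomorphism with a surjection leaves its image unchanged; the factor acting on the codomain is an automorphism $\varphi$ of $\mathbb{Z}^n$ carrying $\operatorname{im}(A)$ onto $\operatorname{im}(D)$, and any such $\varphi$ descends to an isomorphism $\mathbb{Z}^n/\operatorname{im}(A)\xrightarrow{\ \sim\ }\mathbb{Z}^n/\varphi(\operatorname{im}(A))=\mathbb{Z}^n/\operatorname{im}(D)$ — the elementary fact that $M/N\cong M/\varphi(N)$ for any automorphism $\varphi$ of $M$. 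Hence $\operatorname{coker}(A)\cong\operatorname{coker}(D)$.

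Finally I would compute $\operatorname{coker}(D)$ directly. The image of $D$ inside $\mathbb{Z}^n$ is exactly the submodule $\alpha_1\mathbb{Z}\oplus\cdots\oplus\alpha_r\mathbb{Z}\oplus 0^{\,n-r}$, so the quotient splits coordinatewise as $\bigoplus_{i=1}^r \mathbb{Z}/\alpha_i\mathbb{Z}\ \oplus\ \mathbb{Z}^{\,n-r}$, which is the asserted decomposition once we write $\mathbb{Z}_{\alpha_i}=\mathbb{Z}/\alpha_i\mathbb{Z}$. Unit elementary divisors $\alpha_i=1$, if present, simply contribute trivial summands, so the statement remains consistent, and since $A\neq 0$ by hypothesis we have $r\geq 1$, so no vacuous case arises.

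The computations are entirely routine; the only point needing care is keeping the variance straight — distinguishing which of $U,V$ reparametrizes the source of $A$ (and hence leaves the image untouched) from the one that reparametrizes the target (and hence moves the image but induces the cokernel isomorphism). Pinning this down is the one easy place to slip, and once it is settled the argument is immediate. This is, in fact, the same mechanism underlying Theorem~\ref{estrutura}, so the lemma can alternatively be viewed as that structure theorem applied to the finitely generated abelian group $\operatorname{coker}(A)$.
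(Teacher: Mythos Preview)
Your argument is correct and is exactly the approach taken in the paper: reduce to the Smith normal form $D=UAV$ and observe that the automorphisms $U,V$ induce an isomorphism $\operatorname{coker}(A)\cong\operatorname{coker}(D)$, after which the diagonal shape of $D$ gives the decomposition immediately. The paper's proof is the one-line version of this, recording only the explicit map $x+\operatorname{im}(A)\mapsto U^{-1}x+\operatorname{im}(D)$; your write-up simply unpacks the same idea with more care about which factor acts on the source versus the target and with the coordinatewise computation of $\operatorname{coker}(D)$ spelled out.
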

\begin{proof}
    Note that $\operatorname{coker}(A) \cong \operatorname{coker}(B)$ via $x+\operatorname{im}(A) \mapsto U^{-1} x+\operatorname{im}(B)$, for $A=U B V$.
\end{proof}
\par An important case is the description of a homology group over $\mathbb{Z}$ using the previous lemma.
\begin{lema}\label{lem_homo}
Let $B:\mathbb{Z}^m \longrightarrow\mathbb{Z}^n$ and $A:\mathbb{Z}^n \longrightarrow\mathbb{Z}^d$ be integers matrices such that $AB=0$. If $r$ is the $\operatorname{rank}$ of $B$ and $s$ is the $\operatorname{rank}$ of $A$, then $\operatorname{ker}(A) / \operatorname{im}(B) \cong \bigoplus_{i=1}^r \mathbb{Z}_{\alpha_{i}} \oplus \mathbb{Z}^{n-r-s}$. 
\end{lema}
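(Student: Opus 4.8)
The plan is to route the computation through the cokernel $\operatorname{coker}(B)=\mathbb{Z}^n/\operatorname{im}(B)$, which the preceding lemma already identifies, and then pin down separately the torsion subgroup and the free rank of $\ker(A)/\operatorname{im}(B)$.

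First, since $AB=0$ we have $\operatorname{im}(B)\subseteq\ker(A)$, so $\ker(A)/\operatorname{im}(B)$ is well-defined; moreover $\ker(A)\cap\operatorname{im}(B)=\operatorname{im}(B)$, so the inclusion $\ker(A)\subseteq\mathbb{Z}^n$ induces an injection
\[
\ker(A)/\operatorname{im}(B)\;\hookrightarrow\;\mathbb{Z}^n/\operatorname{im}(B)=\operatorname{coker}(B).
\]
By the preceding lemma applied to $B$ (whose elementary divisors are $\alpha_1,\dots,\alpha_r$), $\operatorname{coker}(B)\cong\bigoplus_{i=1}^r\mathbb{Z}_{\alpha_i}\oplus\mathbb{Z}^{n-r}$, so its torsion subgroup is $T(\operatorname{coker}(B))\cong\bigoplus_{i=1}^r\mathbb{Z}_{\alpha_i}$.

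Next I would show that all of this torsion already lives inside the subgroup $\ker(A)/\operatorname{im}(B)$. Suppose $x\in\mathbb{Z}^n$ and $x+\operatorname{im}(B)$ is torsion in $\operatorname{coker}(B)$, say $\ell x\in\operatorname{im}(B)\subseteq\ker(A)$ for some nonzero integer $\ell$. Then $\ell\cdot(Ax)=A(\ell x)=0$ in $\mathbb{Z}^d$, and since $\mathbb{Z}^d$ is torsion-free this forces $Ax=0$, i.e. $x\in\ker(A)$. Hence $T(\operatorname{coker}(B))\subseteq\ker(A)/\operatorname{im}(B)$; combining this with the trivial inclusion $T\big(\ker(A)/\operatorname{im}(B)\big)\subseteq T(\operatorname{coker}(B))$ gives $T\big(\ker(A)/\operatorname{im}(B)\big)\cong\bigoplus_{i=1}^r\mathbb{Z}_{\alpha_i}$.

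Finally, for the free rank: $\ker(A)$ is a subgroup of $\mathbb{Z}^n$, hence free, and tensoring $0\to\ker(A)\to\mathbb{Z}^n\xrightarrow{A}\mathbb{Z}^d$ with $\mathbb{Q}$ shows $\operatorname{rank}\ker(A)=n-s$ with $s=\operatorname{rank}(A)$; likewise $\operatorname{rank}\operatorname{im}(B)=r$. From the short exact sequence $0\to\operatorname{im}(B)\to\ker(A)\to\ker(A)/\operatorname{im}(B)\to 0$ and additivity of rank, $\operatorname{rank}\big(\ker(A)/\operatorname{im}(B)\big)=n-r-s$. Since $\ker(A)/\operatorname{im}(B)$ is finitely generated abelian, Theorem~\ref{estrutura} splits it as its torsion subgroup plus a free group of this rank, yielding $\ker(A)/\operatorname{im}(B)\cong\bigoplus_{i=1}^r\mathbb{Z}_{\alpha_i}\oplus\mathbb{Z}^{n-r-s}$. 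The one genuinely substantive step is the torsion-transfer argument — that passing from $\operatorname{coker}(B)$ to the subgroup carved out by $\ker(A)$ loses none of the torsion; the remainder is bookkeeping with ranks, and it is worth flagging that the $\alpha_i$ here are the elementary divisors of $B$, which is precisely why the argument never needs $A$ beyond its rank.
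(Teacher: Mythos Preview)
Your proof is correct and follows essentially the same route as the paper: both pass through $\operatorname{coker}(B)$ via the preceding lemma, both use the torsion-freeness of the target $\mathbb{Z}^d$ to conclude that all torsion of $\operatorname{coker}(B)$ lies in $\ker(A)/\operatorname{im}(B)$, and both finish with a rank count. The paper phrases the torsion step as ``$\operatorname{im}(\bar A)$ is free, so the torsion of $\mathbb{Z}^n/\operatorname{im}(B)$ lies in $\ker(\bar A)$,'' where $\bar A$ is the map induced on $\operatorname{coker}(B)$; your explicit $\ell x$ argument is the same observation unpacked, and your rank computation via tensoring with $\mathbb{Q}$ is a cleaner version of the paper's terse ``$n-r=\ker(A)+s$.''
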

\begin{proof}
    The proof consists in consider the homomorphism $\bar{A}: \mathbb{Z}^n / \operatorname{im}(B) \rightarrow \mathbb{Z}^l$ and noticing that $\operatorname{ker}(A) / \operatorname{im}(B)\cong \operatorname{ker}(\bar{A})$. The previous lemma ensures $\mathbb{Z}^n / \operatorname{im}(B) \cong \bigoplus_{i=1}^r \mathbb{Z}_{\alpha_i} \oplus \mathbb{Z}^{n-r}$. Since we define $\bar{A}(\bar{z})=A(z)$, for all classes $\bar{z}\in \mathbb{Z}^n / \operatorname{im}(B)$, one may note that $\operatorname{im}(A)=\operatorname{im}(\bar{A})$. Because $\operatorname{im}(\bar{A})$ is free, the torsion of $\mathbb{Z}^n / \operatorname{im}(B)$ must be in the kernel of $\bar{A}$. Therefore, $\operatorname{ker}(\bar{A})=\bigoplus_{i=1}^r \mathbb{Z}_{\alpha_i}\oplus \operatorname{ker}\left(\left.\bar{A}\right|_{\mathbb{Z}^{n-r}} \right)$, which implies that $\operatorname{im}(\bar{A})=\operatorname{im}\left(\left.\bar{A}\right|_{\mathbb{Z}^{n-r}}\right)$. According to the equation $n-r=\operatorname{ker}(A)+s$, it is possible to obtain the desired isomorphism. 
\end{proof}
\begin{prop}\label{notorsion}
    Let $\alpha_1,\ldots,\alpha_r$ and $\tilde{\alpha}_1,\ldots,\tilde{\alpha}_s$ be the elementary divisors of $\boldsymbol{\partial}_{k+1}$ and $\boldsymbol{\partial}_{k}$, and $p$ any prime such that $\operatorname{gcd}\left(p, \alpha_r \tilde{\alpha}_s\right)=1$. Then, $\boldsymbol{H}_k\left(\Delta; \mathbb{Z}_p\right)\cong \mathbb{Z}_p^{\beta_k}$.
\end{prop}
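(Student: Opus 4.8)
The plan is to feed the Universal Coefficient Theorem (Theorem~\ref{uct}) with $G=\mathbb{Z}_p$ and to show that, under the stated coprimality hypothesis, the $\operatorname{Tor}$ term vanishes and the tensor term collapses to a free $\mathbb{Z}_p$-module of rank $\beta_k$. First I would apply Theorem~\ref{uct} to the simplicial chain complex $\boldsymbol{C}_\bullet(\Delta,\mathbb{Z})$, which is free abelian, obtaining the split short exact sequence
\[
0 \to H_k(\Delta)\otimes\mathbb{Z}_p \to H_k(\Delta;\mathbb{Z}_p) \to \operatorname{Tor}\bigl(H_{k-1}(\Delta),\mathbb{Z}_p\bigr) \to 0.
\]
Next I would use Lemma~\ref{lem_homo} with $B=\boldsymbol{\partial}_{k+1}$ and $A=\boldsymbol{\partial}_{k}$ to record that $H_k(\Delta)\cong\mathbb{Z}^{\beta_k}\oplus\bigoplus_{i=1}^r\mathbb{Z}_{\alpha_i}$, so the torsion subgroup of $H_k(\Delta)$ is exactly the one governed by the elementary divisors $\alpha_1,\dots,\alpha_r$ of $\boldsymbol{\partial}_{k+1}$; applying the same lemma one degree lower, with $B=\boldsymbol{\partial}_k$ and $A=\boldsymbol{\partial}_{k-1}$, shows the torsion subgroup of $H_{k-1}(\Delta)$ is governed by $\tilde{\alpha}_1,\dots,\tilde{\alpha}_s$, the elementary divisors of $\boldsymbol{\partial}_k$. (Here $\beta_k=m_k-\operatorname{rank}\boldsymbol{\partial}_{k+1}-\operatorname{rank}\boldsymbol{\partial}_k$ is the ordinary $k$-th Betti number.)

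Then I would exploit the divisibility chains $\alpha_i\mid\alpha_{i+1}$ and $\tilde{\alpha}_j\mid\tilde{\alpha}_{j+1}$ from Theorem~\ref{sfn}. Since $\gcd(p,\alpha_r\tilde{\alpha}_s)=1$ with $p$ prime, and since every $\alpha_i$ divides $\alpha_r$ while every $\tilde{\alpha}_j$ divides $\tilde{\alpha}_s$, we get $\gcd(p,\alpha_i)=\gcd(p,\tilde{\alpha}_j)=1$ for all $i,j$. Hence $\mathbb{Z}_{\alpha_i}\otimes\mathbb{Z}_p\cong\mathbb{Z}_{\gcd(\alpha_i,p)}=0$ and $\operatorname{Tor}(\mathbb{Z}_{\tilde{\alpha}_j},\mathbb{Z}_p)\cong\mathbb{Z}_{\gcd(\tilde{\alpha}_j,p)}=0$. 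Using additivity of $\otimes$ and $\operatorname{Tor}$ over finite direct sums, together with $\mathbb{Z}\otimes\mathbb{Z}_p=\mathbb{Z}_p$ and $\operatorname{Tor}(\mathbb{Z},\mathbb{Z}_p)=0$, this yields $H_k(\Delta)\otimes\mathbb{Z}_p\cong\mathbb{Z}_p^{\beta_k}$ and $\operatorname{Tor}(H_{k-1}(\Delta),\mathbb{Z}_p)=0$. Substituting both into the exact sequence forces $H_k(\Delta;\mathbb{Z}_p)\cong\mathbb{Z}_p^{\beta_k}$; the vanishing of the rightmost term makes the left-hand map an isomorphism, so the splitting statement of Theorem~\ref{uct} is not even needed.

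The step I expect to need the most care is not computational but bookkeeping: keeping straight \emph{which} boundary map's elementary divisors control the torsion in each homological degree, and verifying that the single hypothesis $\gcd(p,\alpha_r\tilde{\alpha}_s)=1$ really does suppress $p$-torsion in both $H_k(\Delta)$ \emph{and} $H_{k-1}(\Delta)$ at once. This is precisely where the Smith normal form divisor chain earns its keep — one only needs to check coprimality against the two largest divisors $\alpha_r$ and $\tilde{\alpha}_s$, and the chain conditions propagate it to all the others. A minor auxiliary point worth stating explicitly is that the hypotheses of Theorem~\ref{uct} are met because simplicial chain groups $\boldsymbol{C}_k(\Delta,\mathbb{Z})$ are free abelian of finite rank.
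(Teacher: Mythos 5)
Your proof is correct and follows essentially the same route as the paper: apply the Universal Coefficient Theorem with $G=\mathbb{Z}_p$, use Lemma~\ref{lem_homo} to express the torsion of $H_k(\Delta)$ and $H_{k-1}(\Delta)$ via the elementary divisors of $\boldsymbol{\partial}_{k+1}$ and $\boldsymbol{\partial}_k$, and use the divisibility chain of the Smith normal form to propagate $\operatorname{gcd}(p,\alpha_r\tilde{\alpha}_s)=1$ to every $\alpha_i$ and $\tilde{\alpha}_j$, so that each $\mathbb{Z}_{\alpha_i}\otimes\mathbb{Z}_p$ and each $\operatorname{Tor}(\mathbb{Z}_{\tilde{\alpha}_j},\mathbb{Z}_p)$ vanishes. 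The only difference is one of bookkeeping: the paper first introduces auxiliary invariants $t_i$, $\tilde t_j$ via the Structure Theorem and then identifies them with the $\alpha_i$, $\tilde\alpha_j$ by comparing free ranks and products of invariant factors, whereas you work directly with the $\alpha_i$ produced by Lemma~\ref{lem_homo} and invoke the chain $\alpha_i\mid\alpha_{i+1}$ explicitly, which is leaner and carries the computation cleanly through to the final isomorphism $H_k(\Delta;\mathbb{Z}_p)\cong\mathbb{Z}_p^{\beta_k}$.
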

\begin{proof}
    Let $\boldsymbol{H}_k(\Delta)$ denotes the $kth$-homology group of $\Delta$ over $\mathbb{Z}$ and $\beta_k$ its Betti number. By the Structure theorem \ref{estrutura}, there are some integers $t_1, \ldots,t_r$ such that $\boldsymbol{H}_k(\Delta)\cong \bigoplus_{i=1}^r \mathbb{Z}_{t_i} \oplus\mathbb{Z}^{\beta_k}$. Now, using the Universal Coefficient theorem \ref{uct} for $G=\mathbb{Z}_p$ one has
\begin{align*}
\boldsymbol{H}_k(\Delta; \mathbb{Z}_p) &\cong (\boldsymbol{H}_k(\Delta) \otimes \mathbb{Z}_p) \bigoplus \operatorname{Tor}(\boldsymbol{H}_{k-1}(\Delta), \mathbb{Z}_p) \\
  &\cong \left(\left(\bigoplus_{i=1}^r \mathbb{Z}_{t_i} \oplus \mathbb{Z}^{\beta_k}\right)\otimes \mathbb{Z}_p \right)\bigoplus \operatorname{Tor}(\boldsymbol{H}_{k-1}(\Delta), \mathbb{Z}_p) \\
  &\cong \left(\left(\bigoplus_{i=1}^r (\mathbb{Z}_{t_i}\otimes\mathbb{Z}_p)  \right)
  \oplus \mathbb{Z}_p^{\beta_k}\right)\bigoplus \operatorname{Tor}\left(\boldsymbol{H}_{k-1}(\Delta), \mathbb{Z}_p\right),
\end{align*}
now apply the Structure theorem for $\boldsymbol{H}_{k-1}(\Delta)$, which gives $\boldsymbol{H}_{k-1}(\Delta) \cong \bigoplus_{i=1}^s \mathbb{Z}_{\tilde{t}_i} \oplus \mathbb{Z}^{\beta_{k-1}}$. Consequently, $\operatorname{Tor}\left(\boldsymbol{H}_{k-1}(\Delta), \mathbb{Z}_p\right)  \cong \operatorname{Tor}\left(\bigoplus_{i=1}^s \mathbb{Z}_{\tilde{t}_i} \oplus \mathbb{Z}^{\beta_{k-1}}, \mathbb{Z}_p\right) \cong\bigoplus_{i=1}^s (\mathbb{Z}_{\tilde{t}_i}\otimes\mathbb{Z}_p)$. One property of $\operatorname{Tor}$ is that $\operatorname{Tor}\left(\mathbb{Z}_n, \mathbb{Z}_m\right) \cong \mathbb{Z}_{\operatorname{gcd}(n, m)}$.
\par On the other hand, let us identify $\boldsymbol{C}_k(\Delta, \mathbb{Z}) \cong \mathbb{Z}^{m_k}$ and let $A_{m_k}$ be the matrix $[\boldsymbol{\partial}_k]$. Now, consider the following identified sequence 
\[
\mathbb{Z}^{m_{k+1}} \xrightarrow{A_{m_{k+1}}} \mathbb{Z}^{m_{k}} \xrightarrow{A_{m_k}} \mathbb{Z}^{m_{k-1}} \xrightarrow{A_{m_{k-1}}} \mathbb{Z}^{m_{k-2}}
\]
where $\operatorname{rank} A_{m_{k}}=r_k$. According to Lemma \ref{lem_homo}, $\boldsymbol{H}_k(\Delta)$ and $\boldsymbol{H}_{k-1}(\Delta)$ may both be written as $\boldsymbol{H}_k(\Delta) \cong \bigoplus_{i=1}^r \mathbb{Z}_{\alpha_i} \oplus \mathbb{Z}^{m_k-r_k-r_{k-1}}$ and $\boldsymbol{H}_{k-1}(\Delta) \cong \bigoplus_{i=1}^s \mathbb{Z}_{\tilde{\alpha}_i} \oplus \mathbb{Z}^{m_{k-1}-r_{k-1}-r_{k-2}}$. Relating with the isomorphisms given in the begging of the proof, we must have 
\[
m_k - r_k - r_{k-1} = \beta_k, \quad 
\prod_{i=1}^r \alpha_i = \prod_{i=1}^r t_i, \quad 
m_{k-1} - r_{k-1} - r_{k-2} = \beta_{k-1}, \quad 
\prod_{i=1}^s \tilde{\alpha}_i = \prod_{i=1}^s \tilde{t}_i.
\]
Let $A_j=\{\text{the set of all $j\times j$ minors of $A_{m_{k+1}}$}\}$ and $\tilde{A}_j=\{\text{the set of all $j\times j$ minors of $A_{m_k}$}\}$. These sets define the coefficients $\alpha_i=\frac{\operatorname{gcd}\{A_i\}}{\operatorname{gcd}\{A_{i-1}\}}$ and $\tilde{\alpha}_i = \frac{\operatorname{gcd}\{\tilde{A}_i\}}{\operatorname{gcd}\{\tilde{A}_{i-1}\}}$. Our focus is on $\alpha_r$ and $\tilde{\alpha}_s$. If $d$ is such a prime that does not divides $\alpha_r$ neither $\tilde{\alpha}_s$ we have $\operatorname{gcd}(d,\alpha_r\tilde{\alpha}_s)=1$, and also the converse is true. Take $p$ as in the hypothesis, then $\operatorname{gcd}(p,t_r)=1$ and $\operatorname{gcd}(p,\tilde{t}_s)=1$, because as $p$ does not divides $\alpha_r$ neither $\tilde{\alpha}_s$, the same holds for $\prod_{i=1}^r t_i$ and $\prod_{i=1}^s \tilde{t}_i$.
\end{proof}
\begin{coro}\label{goodprimes}
Let $\{\Delta_\epsilon\}_{\epsilon \in I}$ be a finite filtration.
Then there exists a finite set $P$ of primes such that, for all $p\notin P$
and all $\epsilon\in I$,
\[
\boldsymbol{H}_k(\Delta_\epsilon;\mathbb{Z}_p)
\cong
\mathbb{Z}_p^{\beta_k(\Delta_\epsilon)}.
\]
\end{coro}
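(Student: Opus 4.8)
The plan is to reduce Corollary~\ref{goodprimes} to a finite application of Proposition~\ref{notorsion}. Since $\{\Delta_\epsilon\}_{\epsilon\in I}$ is a \emph{finite} filtration, it has only finitely many distinct complexes, say $\Delta^{(1)}\subseteq\cdots\subseteq\Delta^{(N)}$, and each of these has finitely many nonzero chain groups (bounded by the dimension of the top simplex). Thus there are only finitely many boundary operators $\boldsymbol{\partial}_k^{(j)}$ in total across the whole filtration, and by Theorem~\ref{sfn} each of them has a well-defined finite list of elementary divisors. First I would collect, over all complexes $\Delta^{(j)}$ in the filtration and all degrees $k$, the top elementary divisors $\alpha_{r}^{(j,k)}$ of $\boldsymbol{\partial}_{k+1}^{(j)}$ and $\tilde\alpha_{s}^{(j,k)}$ of $\boldsymbol{\partial}_{k}^{(j)}$, and let $M$ be the product of all of these nonzero integers.

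Next I would define $P$ to be the set of prime divisors of $M$; this is a finite set since $M$ is a nonzero integer (each elementary divisor is a nonzero integer, and a finite product of nonzero integers is nonzero). For any prime $p\notin P$ we have $\gcd(p,M)=1$, hence in particular $\gcd\bigl(p,\alpha_{r}^{(j,k)}\,\tilde\alpha_{s}^{(j,k)}\bigr)=1$ for every complex $\Delta^{(j)}$ in the filtration and every degree $k$. Applying Proposition~\ref{notorsion} to each pair $(\Delta^{(j)},k)$ then yields $\boldsymbol{H}_k(\Delta^{(j)};\mathbb{Z}_p)\cong\mathbb{Z}_p^{\beta_k(\Delta^{(j)})}$. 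Since every $\Delta_\epsilon$ equals some $\Delta^{(j)}$, this is exactly the claimed isomorphism for all $\epsilon\in I$ simultaneously, and the argument is complete.

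The only subtlety worth spelling out is the well-definedness of $M$: one must check that each relevant boundary operator is a \emph{nonzero} matrix so that Theorem~\ref{sfn} applies and produces at least one elementary divisor — but if $\boldsymbol{\partial}_{k+1}^{(j)}$ or $\boldsymbol{\partial}_{k}^{(j)}$ is the zero map, Proposition~\ref{notorsion} holds vacuously for the corresponding torsion contribution (that summand is simply absent, equivalently one may take an empty product equal to $1$), so such degrees impose no constraint on $p$ and can be omitted from the definition of $M$ without harm. I do not expect a genuine obstacle here; the content is entirely in Proposition~\ref{notorsion}, and the corollary is the routine ``uniformize over a finite index set'' step, the one mild point being to make explicit that finiteness of the filtration plus finite-dimensionality of each complex gives only finitely many elementary divisors to avoid.
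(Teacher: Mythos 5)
Your argument is essentially identical to the paper's: exploit the finiteness of the filtration to reduce to finitely many stages, collect the primes dividing the top elementary divisors of the relevant boundary operators at each stage, take the union (equivalently, the prime divisors of the product $M$), and invoke Proposition~\ref{notorsion} at each stage. You flesh out the edge case of zero boundary maps, which the paper leaves implicit, but the underlying argument is the same.
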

\begin{proof}
    For each $\epsilon$, Proposition \ref{notorsion} the existence of a finite set $P_\epsilon$ built of primes dividing the elementary divisors of $\boldsymbol{\partial}_{k+1}$ neither $\boldsymbol{\partial}_{k}$. The filtration is essentially determined by $N+1$ stages: $\epsilon_0, \epsilon_1,\ldots, \epsilon_N$. Now, just consider $P=\bigcup_{i=0}^N P_{\epsilon_i}$.
\end{proof}

\begin{remark}
Since $\mathbb{Q}$ and $\mathbb{Z}_p$ are fields, persistent homology with coefficients
in either field admits an interval decomposition.
By the previous corollary, for all but finitely many primes $p$ we have
\[
\boldsymbol{H}_k(\Delta_\epsilon;\mathbb{Z}_p)\cong \mathbb{Z}_p^{\beta_k(\Delta_\epsilon)}
\quad \text{for all } \epsilon.
\]
Moreover, for all $\epsilon$,
\[
\dim_{\mathbb{Q}} \boldsymbol{H}_k(\Delta_\epsilon;\mathbb{Q})
=\beta_k(\Delta_\epsilon)
=\dim_{\mathbb{Z}_p} \boldsymbol{H}_k(\Delta_\epsilon;\mathbb{Z}_p).
\]
Consequently, for every $\epsilon \le \epsilon'$,
\[
\operatorname{rank}\!\left(
\boldsymbol{H}_k(\Delta_\epsilon \to \Delta_{\epsilon'};\mathbb{Z}_p)
\right)
=
\operatorname{rank}\!\left(
\boldsymbol{H}_k(\Delta_\epsilon \to \Delta_{\epsilon'};\mathbb{Q})
\right).
\]
Thus, the rank invariants of the persistence modules
$\boldsymbol{H}_k(\Delta_\bullet;\mathbb{Q})$ and $\boldsymbol{H}_k(\Delta_\bullet;\mathbb{Z}_p)$ coincide.
By the classification theorem for pointwise finite-dimensional persistence modules
over a field, the rank invariant uniquely determines the interval decomposition.
Therefore, persistent homology over $\mathbb{Q}$ and over $\mathbb{Z}_p$ has the same
intervals with the same multiplicities; see, for instance,
\cite{crawleyboevey2015,chazal2016structure}.
\end{remark}

\par The study of homology of simplicial complexes takes a special place when computing it to the Rips complex due to its use on a filtration of a set of points and its diagram persistence. Even we have defined the Vitoris-Rips complex for a finite subset of a metric or Finsler space, it is completely fine to extend this definition and defining it on an infinite set of points. One of the main manuscripts on the studying of the Rips complex of a continuous manifold is the paper, where the author gives enough condition to study the homology of a manifold through a Rips complex and reciprocally, as you can see in the following theorem.
\begin{theo}[Hausmann, \cite{Hausmann1996}]
    Let $M$ be a closed Riemannian manifold, then there exists a sufficiently small $\epsilon$ such that $Rips(M,\epsilon)$ is homotopy equivalent to $M$. 
\end{theo}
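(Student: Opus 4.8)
The plan is to exhibit an explicit homotopy equivalence between the geometric realisation $|Rips(M,\epsilon)|$ and $M$, descending by a Riemannian centre‑of‑mass map and ascending by a partition‑of‑unity map. Here $Rips(M,\epsilon)$ is taken with vertex set all of $M$, a finite subset of $M$ spanning a simplex exactly when it has diameter less than $\epsilon$. First I would use compactness of $M$ to fix $\epsilon$ smaller than the injectivity radius and small enough that every closed metric ball of radius $\epsilon$ is geodesically strongly convex; for such $\epsilon$ the theory of the Riemannian centre of mass (Karcher, Grove--Karcher) gives, for any finite weighted configuration $(t_0,x_0),\dots,(t_k,x_k)$ with $t_i\ge 0$, $\sum_i t_i=1$ and $\operatorname{diam}\{x_i\}<\epsilon$, a unique minimiser $\operatorname{cm}(\sum_i t_i x_i)\in M$ of $y\mapsto\sum_i t_i\,d(y,x_i)^2$, which lies in the geodesic convex hull of $\{x_i\}$ and depends smoothly on the $t_i$ and the $x_i$. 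A point $\xi\in|Rips(M,\epsilon)|$ lies in the interior of a unique simplex $\langle x_0,\dots,x_k\rangle$ with barycentric coordinates $t_0,\dots,t_k$, and I define $\varphi(\xi):=\operatorname{cm}(\sum_i t_i x_i)$. Dropping a vertex that carries zero weight shows $\varphi$ respects face inclusions, and the smoothness statement makes $\varphi\colon|Rips(M,\epsilon)|\to M$ continuous; note $\varphi$ fixes every vertex.

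For the reverse direction I would pick $\delta>0$ with $2\delta<\epsilon$, a finite $\delta$-net $\{p_1,\dots,p_N\}\subset M$, and a partition of unity $\{\lambda_j\}$ subordinate to $\{\mathcal{B}_\delta(p_j)\}$, and set $\psi(x):=\sum_j\lambda_j(x)\,p_j\in|Rips(M,\epsilon)|$; if $\lambda_j(x)$ and $\lambda_{j'}(x)$ are both nonzero then $p_j,p_{j'}\in\mathcal{B}_\delta(x)$, so $d(p_j,p_{j'})<2\delta<\epsilon$ and the supporting vertices really do span a simplex, so $\psi$ is continuous. That $\varphi\circ\psi\simeq\operatorname{id}_M$ is then easy: $\varphi(\psi(x))$ is the centre of mass of a configuration inside the convex ball $\mathcal{B}_\delta(x)$, hence itself lies in $\mathcal{B}_\delta(x)$ and in particular within the injectivity radius of $x$, so the geodesic homotopy $H(x,s)=\exp_x\!\big(s\,\exp_x^{-1}\varphi\psi(x)\big)$ continuously deforms $\varphi\psi$ to the identity on $M$.

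The substantive step is $\psi\circ\varphi\simeq\operatorname{id}_{|Rips(M,\epsilon)|}$. The natural attempt is the simplicial straight‑line homotopy: two maps into $|Rips(M,\epsilon)|$ that agree up to a common simplex at every point are homotopic, so it would suffice that, for each $\xi$ in a simplex $\langle x_0,\dots,x_k\rangle$, the union of $\{x_0,\dots,x_k\}$ with the net points supporting $\psi(\varphi(\xi))$ still has diameter less than $\epsilon$; for that one needs $d(\varphi(\xi),x_i)$ bounded by $\epsilon-\delta$ for every vertex $x_i$. One has $d(\varphi(\xi),x_i)<\epsilon$ since $\varphi(\xi)$ lies in the convex hull of the $x_i$, but \emph{not} with a uniform margin, and this is exactly where the difficulty sits.

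I expect that gap to be the main obstacle. Simplices of $Rips(M,\epsilon)$ may have arbitrarily many vertices, and if the weight $t_0$ is nearly zero the centre of mass approaches the centre of mass of the opposite face, which may be almost a full diameter — hence almost $\epsilon$ — away from $x_0$; so no fixed choice of net makes $\xi$ and $\psi\varphi(\xi)$ cofacial in $Rips(M,\epsilon)$. The resolution — the technical heart of Hausmann's argument — is to not insist on staying inside the same Rips complex: build the homotopy over the skeleta of $|Rips(M,\epsilon)|$ inductively, so that near the boundary of a simplex the centre of mass is governed by the already‑treated faces, and/or pass temporarily to a slightly larger parameter $\epsilon'>\epsilon$, where the enlarged vertex sets do have diameter less than $\epsilon'$, so that $\iota$ and $\iota\circ\psi\circ\varphi$ become homotopic in $|Rips(M,\epsilon')|$, and then invoke that the inclusion $|Rips(M,\epsilon)|\hookrightarrow|Rips(M,\epsilon')|$ is itself a homotopy equivalence for small parameters — a monotonicity statement one must establish by the same circle of ideas (careful choice of $\epsilon$ against the injectivity radius and curvature bounds, plus the centre‑of‑mass machinery). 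Everything else — strong convexity of small balls, continuity of the centre of mass, and the two geodesic/simplicial straight‑line homotopies — is routine; essentially all the work is in making this last comparison uniform over all simplices.
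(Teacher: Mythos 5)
The paper gives no proof of this statement; it cites Hausmann and then explicitly says ``We skip the proof of these last three theorems,'' using the result only to motivate examples of Rips complexes with torsion. There is therefore no argument in the paper against which to compare yours, and the evaluation below is of your sketch on its own terms and against Hausmann's published proof.

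Your overall shape — an averaging map $\varphi\colon|Rips(M,\epsilon)|\to M$, a partition-of-unity map $\psi\colon M\to|Rips(M,\epsilon)|$ supported on a $\delta$-net, and two homotopies — is in the same circle of ideas as Hausmann, but the descent map is not the same. Hausmann does not use the Riemannian center of mass: he fixes a total order on the vertex set and defines his map $T$ by \emph{iterated geodesic interpolation}, peeling off one vertex at a time in that order. Your Karcher-mean variant is symmetric and choice-free, and the Grove--Karcher existence, uniqueness, and smoothness you invoke do hold once $\epsilon$ is below the strong-convexity radius, so $\varphi$ is well defined, respects faces, and $\varphi\circ\psi\simeq\operatorname{id}_M$ via the geodesic homotopy is fine. (One technicality you gloss over: $Rips(M,\epsilon)$ has an uncountable vertex set, and the choice of topology on $|Rips(M,\epsilon)|$ matters for continuity of $\varphi$ and $\psi$; Hausmann is careful about this and you should be too.)

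The genuine gap is exactly the one you flag: $\psi\circ\varphi\simeq\operatorname{id}_{|Rips(M,\epsilon)|}$. Your diagnosis of why the straight-line simplicial homotopy fails is correct — the carrier vertices of $\xi$ together with the net points supporting $\psi\varphi(\xi)$ can have diameter up to roughly $\epsilon+\delta$, and there is no uniform margin in the estimate $d(\varphi(\xi),x_i)<\epsilon$ to absorb the extra $\delta$, so the union need not span a simplex. Of the two repairs you float, the enlargement to $\epsilon'>\epsilon$ is circular exactly as you concede: showing $|Rips(M,\epsilon)|\hookrightarrow|Rips(M,\epsilon')|$ is a homotopy equivalence is the same order of difficulty as the theorem itself. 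The skeleton-by-skeleton induction is the route that can be made to work (and is closer to what Hausmann's order-dependent construction buys for free), but in your proposal it is stated as an intention rather than carried out: one must actually prove that the inductive diameter estimates close, i.e., that a homotopy constructed on the boundary of a simplex extends over the simplex without the intermediate configurations ever exceeding diameter $\epsilon$. As written, your proposal is an honest outline with an explicitly flagged hole, not a proof, and the hole is precisely the step where Hausmann's technical work lives.
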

\par One of the questions left by the author in that paper is the conjecture about the possibility of recovering the homology of $M$ by the Rips complex of a finite subset of $M$ when adding compactness on $M$. Recently, an affirmative answer for the circle $\mathbb{S}^1$ was given in \cite{Adama}; we reproduce their stronger version theorem \textit{ipsissimis litteris}:
\begin{theo}
If $X$ is dense in $\mathbb{S}^{1}$ (in particular when $X = \mathbb{S}^{1}$) and $0 < r < \tfrac12$, then we have
\[
Rips(X, r) \simeq \mathbb{S}^{2l+1}
\qquad\text{for}\qquad
\frac{l}{2l+1} < r < \frac{l+1}{2l+3},
\quad l = 0,1,2,\ldots
\]
\end{theo}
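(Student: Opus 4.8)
The plan is to reduce everything to a combinatorial computation for finite cyclically ordered point sets and then recover the dense case by a direct-limit argument. Normalise the geodesic metric on $\mathbb{S}^1$ so that the circumference equals $1$; the hypothesis $r<\tfrac12$ then says precisely that no two points sit at the maximal (diametral) distance. Two observations get the reduction going: (i) every finite $S\subset\mathbb{S}^1$, listed in cyclic order, yields a complex $Rips(S,r)$ that is the clique complex of a \emph{cyclic graph} — a graph on a cyclically ordered vertex set whose edge relation is invariant under the circular order and closed under ``nesting''; and (ii) $Rips(X,r)=\varinjlim_{S}Rips(S,r)$, the direct limit over finite subsets $S\subset X$, so it suffices to control clique complexes of cyclic graphs and the maps between them induced by adding vertices.

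The combinatorial core is a structure theorem: the clique complex of a cyclic graph $G$ is homotopy equivalent to an odd sphere $\mathbb{S}^{2l+1}$, a wedge of even spheres $\bigvee\mathbb{S}^{2l}$, or a point, the case being determined by a rational invariant $\mathrm{wf}(G)\in[0,\tfrac12)$ — the winding fraction — via: $\mathrm{wf}(G)\in\bigl(\tfrac{l}{2l+1},\tfrac{l+1}{2l+3}\bigr)$ forces $\mathbb{S}^{2l+1}$, while $\mathrm{wf}(G)=\tfrac{l}{2l+1}$ gives the even wedge. I would prove this by discrete Morse theory on the face poset of the clique complex, constructing an explicit acyclic matching whose critical cells are exactly one vertex together with one cell in each relevant dimension up to $2l+1$, so that the Morse inequalities pin down the homotopy type; as a cross-check I would run an induction on the number of vertices, deleting a well-chosen vertex and analysing its link (again the clique complex of a smaller cyclic graph) through the resulting homotopy pushout. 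The same circle of ideas shows that an inclusion of cyclic graphs whose winding fractions lie in a common open window $(\tfrac{l}{2l+1},\tfrac{l+1}{2l+3})$ induces a homotopy equivalence of clique complexes.

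With this in place I would compute $\mathrm{wf}$ for the complexes at hand. For $X=\mathbb{S}^1$ one checks, using the evenly spaced models $\mathbb{Z}_n$ (joining points at cyclic distance $\le\lfloor rn\rfloor$) and monotonicity of $\mathrm{wf}$, that $\mathrm{wf}\bigl(Rips(\mathbb{S}^1,r)\bigr)=r$, so $r\in(\tfrac{l}{2l+1},\tfrac{l+1}{2l+3})$ immediately gives $Rips(\mathbb{S}^1,r)\simeq\mathbb{S}^{2l+1}$. For a dense $X$, once $r$ lies strictly inside one window, monotonicity of $\mathrm{wf}$ (bounded above by $\mathrm{wf}(Rips(\mathbb{S}^1,r))=r$) forces every sufficiently fine finite $S\subset X$ to have $\mathrm{wf}(Rips(S,r))$ in that same window; hence each such $Rips(S,r)\simeq\mathbb{S}^{2l+1}$ and all bonding maps among them are homotopy equivalences. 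Since homotopy groups commute with direct limits along a cofinal system of homotopy equivalences, the inclusion of any one such $Rips(S_0,r)$ into $Rips(X,r)=\varinjlim_S Rips(S,r)$ is a weak equivalence between CW complexes, hence a homotopy equivalence by Whitehead's theorem, giving $Rips(X,r)\simeq\mathbb{S}^{2l+1}$.

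The main obstacle is the structure theorem for clique complexes of cyclic graphs: designing the acyclic Morse matching and proving it has precisely the predicted critical cells is the genuinely technical step, and it is there that the Farey-type thresholds $\tfrac{l}{2l+1}$ emerge — one must show that the nesting-closure condition is rigid enough that nothing but spheres and wedges of spheres can occur, and that the sphere dimension increases exactly at those values of $\mathrm{wf}$. A secondary, more routine difficulty is the limiting step: checking that the relevant inclusions really induce homotopy (not merely homology) equivalences and that no homotopy is lost in the colimit, which the cofinality-plus-Whitehead argument handles once the finite case and the behaviour of $\mathrm{wf}$ under refinement are secured.
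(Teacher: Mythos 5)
This theorem is not proved in the paper: it is quoted verbatim (``\emph{ipsissimis litteris}'') from Adamaszek and Adams, and the author explicitly says ``We skip the proof of these last three theorems,'' so there is no in-paper argument to compare your attempt against. What you have written is in fact a faithful sketch of the Adamaszek--Adams strategy itself: the reduction to clique complexes of \emph{cyclic graphs}, the introduction of the \emph{winding fraction} as the controlling invariant, the structure theorem that clique complexes of cyclic graphs are homotopy equivalent to odd spheres or wedges of even spheres (with the Farey-type thresholds $\tfrac{l}{2l+1}$ separating the regimes), the computation $\mathrm{wf}(Rips(\mathbb{S}^1,r))=r$, and the colimit-plus-cofinality argument to pass from finite subsets to a dense $X$. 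You correctly identify the structure theorem for cyclic graphs as the genuine technical bottleneck. One remark on method: the original source does not establish that structure theorem primarily via discrete Morse theory; Adamaszek's earlier work on clique complexes of circular graphs uses vertex removals (folds/dismantlability) and an induction via homotopy pushouts on the link, closer to the ``cross-check'' you mention than to an explicit Morse matching. Your plan is therefore a plausible alternative route through the hard step rather than a reproduction of the original one, and it remains at outline level there; but as a blind reconstruction of the theorem's proof architecture it is accurate, and it correctly locates where the real work lies. Since the paper under review merely imports this theorem for context (to exhibit examples of Rips complexes with torsion), none of this affects the paper's own contributions.
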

\begin{remark}
    Even this theorem states the homology for dense subsets of $\mathbb{S}^1$, in the same paper they prove a similar result when $X$ is a finite subset of the circle.
\end{remark} 
\par The previous theorem and its related paper answered the question of Hausmann for an example of compact manifold and a finite Rips complex. However, in the same direction, Letschev has proved in the early years of 2000s, that for a closed Riemannian manifold $M$, a dense subset of $M$, under some conditions, has the same homology as $M$. See the following theorem to understand.
\begin{theo}[Latschev, \cite{Latschev}]
    Let $M$ be a closed Riemannian manifold. Then there exists $\epsilon_0>0$ such that for every $0<\epsilon\leq\epsilon_0$ there exists a $\delta>0$ such that the geometric realization of $Rips(Y,\epsilon)$ of any metric space $Y$ which has Gromov-Hausdorf distance less than $\delta$ to $M$ is homotopy equivalent to $M$.
\end{theo}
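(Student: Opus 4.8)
The plan is to deduce the theorem from Hausmann's theorem \cite{Hausmann1996}, which already gives the conclusion when $Y=M$, by transporting the homotopy type across the Gromov--Hausdorff neighbourhood. The guiding idea is that a correspondence realising $d_{GH}(Y,M)<\delta$ yields a small \emph{interleaving} between the Vietoris--Rips filtrations of $Y$ and of $M$, and that a filtration interleaved with one whose homotopy type is ``constant'' on an interval must itself be ``constant'' on a slightly shorter interval, with the same homotopy type there. I would carry this out in three steps.

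\emph{Step 1: a coherent form of Hausmann's theorem.} First I would upgrade Hausmann's statement to: there is $\epsilon_0>0$ such that the diagram $s\mapsto|Rips(M,s)|$ on $(0,\epsilon_0]$ is, up to homotopy, constant at $M$; that is, each $|Rips(M,s)|$ is homotopy equivalent to $M$ \emph{and} every inclusion $|Rips(M,s)|\hookrightarrow|Rips(M,t)|$ with $s\le t\le\epsilon_0$ is a homotopy equivalence. This is read off from Hausmann's construction of a retraction $\rho_s\colon|Rips(M,s)|\to M$ built from a fixed total order on $M$ and geodesic convexity of balls of radius $<\epsilon_0$, together with the compatibility of the $\rho_s$ up to homotopy; alternatively one sandwiches $Rips$ between two \v{C}ech complexes by Proposition~\ref{inclusion_1} and applies the Nerve Theorem \cite{alexandroff1928} to the good cover of $M$ by small metric balls, whose nerves at different scales are coherently homotopy equivalent.

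\emph{Step 2: the interleaving.} Fix $0<\epsilon\le\epsilon_0$, a metric space $Y$ with $d_{GH}(Y,M)<\delta$, and a correspondence $R\subseteq M\times Y$ of distortion less than $2\delta$. Choosing $\phi\colon Y\to M$ and $\psi\colon M\to Y$ with $(\phi(y),y),(m,\psi(m))\in R$, the distortion bound shows that $\phi$ and $\psi$ map a simplex of a Rips complex into a simplex of the Rips complex whose parameter is larger by $2\delta$, giving simplicial maps
\[
\Phi_s\colon Rips(Y,s)\to Rips(M,s+2\delta),\qquad
\Psi_s\colon Rips(M,s)\to Rips(Y,s+2\delta).
\]
Since $d_Y(y,\psi\phi(y))<2\delta$ and $d_M(m,\phi\psi(m))<2\delta$, for every simplex $\sigma$ the union of $\sigma$ with its image under the corresponding round trip still spans a simplex at the enlarged parameter; hence $\Psi_{s+2\delta}\circ\Phi_s$ is contiguous to the inclusion $Rips(Y,s)\hookrightarrow Rips(Y,s+4\delta)$ and $\Phi_{s+2\delta}\circ\Psi_s$ to the inclusion $Rips(M,s)\hookrightarrow Rips(M,s+4\delta)$. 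As contiguous simplicial maps are homotopic, the diagrams $\{|Rips(Y,s)|\}$ and $\{|Rips(M,s)|\}$ are $2\delta$-interleaved at the level of homotopy types.

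\emph{Step 3: conclusion.} By Step~1 the $M$-side diagram is homotopy-constant on $(0,\epsilon_0]$, so by Step~2 its interleaving partner is homotopy-constant on the slightly smaller interval $(c\delta,\epsilon_0-c\delta]$ for a small absolute constant $c$, and there $|Rips(Y,s)|\simeq|Rips(M,s)|\simeq M$. Choosing $\delta$ small enough that $\epsilon$ lies in this interval---enlarging $\epsilon_0$ slightly at the outset---gives $|Rips(Y,\epsilon)|\simeq M$, as claimed. I expect two points to be the real work: Step~1, since promoting ``$|Rips(M,s)|\simeq M$'' to the coherence of the structure maps is exactly where the Riemannian geometry enters and forces one to work inside Hausmann's construction rather than merely quote it; and the homotopy-level interleaving principle in Step~3, whose homology analogue---close barcodes for interleaved filtrations---is the routine algebraic stability theorem, but whose upgrade to homotopy equivalences of the spaces themselves needs a genuinely homotopy-theoretic argument, such as comparing mapping telescopes of the interleaved zig-zag or invoking a homotopy-interleaving stability result in the spirit of \cite{Latschev}. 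The remaining items---the contiguity lemma, the distortion bookkeeping, and the final choice of constants---are routine.
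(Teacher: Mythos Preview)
The paper does not prove this theorem: it is quoted from \cite{Latschev} and the authors state explicitly that they ``skip the proof of these last three theorems.'' So there is nothing in the paper to compare your proposal against, and it has to stand on its own.

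Your Steps~1 and~2 are correct and standard. Step~3, however, contains a genuine gap that is not closed by either of the fixes you suggest. The principle you invoke---``if one side of a $2\delta$-homotopy-interleaving is homotopy-constant on an interval, then so is the other on a slightly smaller interval, with the same homotopy type there''---is simply false. Take $A_s=\{\ast\}$ for all $s$, and let $B_s$ be the increasing filtration with $B_s=S^1$ on $[0,1)$, $B_s=D^2$ on $[1,2)$, $B_s=D^2\vee S^1$ on $[2,3)$, $B_s=D^2\vee D^2$ on $[3,4)$, each stage obtained from the previous by an honest inclusion. Every structure map $B_s\hookrightarrow B_{s+1}$ is null-homotopic, so with $2\delta=\tfrac12$ the constant diagram $A_\bullet$ and $B_\bullet$ are $2\delta$-interleaved in exactly your sense; yet $B_{2}\simeq S^1\not\simeq\ast$. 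Mapping telescopes do not help, since they detect only the direct limit and not any individual $B_\varepsilon$; and homotopy-interleaving stability results of Blumberg--Lesnick type yield equivalence of persistence homotopy types, again not pointwise equivalences. Appealing to a result ``in the spirit of \cite{Latschev}'' is circular.

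Latschev's actual argument avoids this trap by never attempting to extract a pointwise statement from interleaving data alone. He uses the correspondence to push the vertices of $Rips(Y,\varepsilon)$ into $M$, so that every simplex lands in a geodesically convex ball of small radius, and then builds a map $|Rips(Y,\varepsilon)|\to M$ by iterated Riemannian barycenters (the same mechanism underlying Hausmann's retraction). The heart of the proof is a direct geometric verification---using convexity of small metric balls and a crushing argument---that this particular map is a homotopy equivalence. In other words, the Riemannian geometry is used essentially in what you call Step~3, not only in Step~1, and the interleaving formalism by itself is too coarse to carry the conclusion.
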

\par We skip the proof of these last three theorems, because we use them to contextualize and gives to us examples of Rips complex of a finite set having torsion.
\begin{ex}
    Let $M=\mathbb{RP}^2$. We already know that $\boldsymbol{H}_1(M,\mathbb{Z)}\cong\mathbb{Z}_2$, and then by Hausmann theorem, $Rips(M,\epsilon)\simeq M$, for sufficiently small $\epsilon$. Let us construct a subset $X$ of $M$ such that $Rips(X,\epsilon)$ is still homotopy equivalent to $M$. Let $C_0$ be the semicircle of $\mathbb{S}^2$ in the $xy$ plane. If $\delta$ is any positive real number, let $N=\lceil \frac{\pi}{\delta} \rceil$ points be evenly spaced around the semicircle at angles $\theta_k = \frac{\pi k}{N}$, $k=0,1,\ldots,N-1$. For each of these $N$ points on $C_0$, take the semicircle connecting this point to its antipodal, by $z\geq0$. Hence, we obtain $N+1$ semicircles, $C_0, C_1, \ldots, C_N$:
    \[
\begin{array}{c|cccc}
\text{points of semicircle } C_0 & p_{00} & p_{01} & \cdots & p_{0N} \\
\text{points of semicircle } C_1 & p_{10} & p_{11} & \cdots & p_{1N} \\
\vdots        & \vdots & \vdots & \ddots & \vdots \\
\text{points of semicircle } C_N & p_{N0} & p_{N1} & \cdots & p_{NN}
\end{array}.
\]
\par Now, lets prove that given a point $p$ in the upper hemisphere of $\mathbb{S}^2$, there exists some $p_{ij}$ such that $d_{\mathbb{S}^2}(p,p_{ij})\leq\delta$. If $p$ lies in any of the semicircles, it is done, by construction. But if $p$ does not belongs to any of the semicircles, we may find $C_i$ and $C_{i+1}$ such as $p$ lies between these two curves. Consider another semicircle $\tilde{C}$ passing through $p$, then there exist $p_{ik}$ and $p_{(i+1)k}$ such that, $p$ has distance less than $\frac{\delta}{2}$. Applying the triangle inequality we obtain $d_{\mathbb{S}^2}(p,p_{(i+1)k})\leq d_{\mathbb{S}^2}(p,\tilde{p}_{ik}) + d_{\mathbb{S}^2}(\tilde{p}_{ik},p_{(i+1)k})\leq \delta$, where $\tilde{p}_{ik}$ is a point given by the intersection of $\tilde{C}$ and the arc connection $p_{ik}$ and $p_{(i+1)k}$. Hence, the set $S=\{p_{ij}\}$ is $\delta$-dense in $\mathbb{S}^2$. We may identify $M$ as $\mathbb{S}^2$ with the antipodal relation. Given $\bar{x},\bar{y}\in M$, we compute their distance by $d_{M}(\bar{x},\bar{y})=\operatorname{min}\{d_{\mathbb{S}^2}(x,y), d_{\mathbb{S}^2}(x,-y)\}$, and then $X=\{\bar{p}_{ij}\}$ is $\delta$-dense in $M$. One basic fact is that the Gromov-Hausdorf distance is bounded by the Hausdorf distance $d_{GH}^M(A,B)\leq d_H^M(A,B)$, where $d_H^M(A,B) = \displaystyle\max\left\{\sup_{a\in A} d_M(a,B),\;
\sup_{b\in B} d_M(b,A) \right\},$ for $A$ and $B$ subsets of $M$. Therefore, for $X$ and $M$ we have $d_{GH}^M(X,M)\leq d_H^M(X,M)\leq \delta$. Since $M$ satisfies the Latschev theorem hypothesis, there exists $\epsilon$ in such way we may choose $\delta$ in our construction as the one in the theorem in order to obtain the homotopy equivalence between $M$ and $Rips(X,\epsilon)$.
\end{ex}
\begin{ex}
    View the Klein bottle $K$ as the quotient of the unit square $[0,1]^2$ by the identifications $(x,0) \sim (x,1)$, for all $x$, and $(0,y) \sim (1,1-y)$ for all $y$,
and write $\pi : [0,1]^2 \to K$ for the quotient map.
Equip $[0,1]^2$ with the Euclidean metric and $K$ with the induced quotient metric. A construction of a $\delta$-dense subset of $K$ is also possible, and since $K$ satisfies the Latschev theorem, we may have the Rips complex of a finite set of points of $K$ homotopy equivalent to $K$, which has $\boldsymbol{H}_1(K,\mathbb{Z)}\cong \mathbb{Z}\oplus\mathbb{Z}_2$. 
\end{ex}
\par It is known that torsion may appears in generic simplicial complex, for example, the triangulation of $\mathbb{RP}^2$ is one with torsion in the first homology group, and as we saw above there exist Rips complexes with torsion as well. Since computational topological data analysis does filtration based on Rips complexes for different scales, we must know that possibly torsion may appear during the process. What Proposition \ref{notorsion} proposes is a scalar field such as the torsional cycles are excluded during filtration.

\end{document}